\numberwithin{equation}{section}
\author{%
  Charles Bertucci$^1$, Jean-Michel Lasry$^2$, Pierre-Louis Lions$^{2,3}$}
\newtheorem{Theorem}{Theorem}[section]
\newtheorem{Rem}[Theorem]{Remark}
\newtheorem{Def}[Theorem]{Definition}
\newtheorem{Prop}[Theorem]{Proposition}
\newtheorem{Cor}[Theorem]{Corollary}
\newcommand{\be}{\begin{equation}}
\newcommand{\ee}{\end{equation}}
\newcommand{\mptd}{\mathcal{P}(\mathbb{T}^d)}
\newcommand{\R}{\mathbb{R}}
\newcommand{\T}{\mathbb{T}}
\title{On Lipschitz solutions of mean field games master equations}
\thanks{$^1$ : CMAP, Ecole Polytechnique, UMR 7641, 91120 Palaiseau, France\\
$^2$: CEREMADE, Universit\'e Paris Dauphine, UMR 7534, 75016 Paris, France\\
$^3$: Coll\`ege de France, 11 place Marcelin Berthelot, 75005 Paris, France
}
\date{} 
\begin{document}
\maketitle
\begin{abstract}
We develop a theory of existence and uniqueness of solutions of MFG master equations when the initial condition is Lipschitz continuous. Namely, we show that as long as the solution of the master equation is Lipschitz continuous in space, it is uniquely defined. Because we do not impose any structural assumptions, such as monotonicity for instance, there is a maximal time of existence for the notion of solution we provide. We analyze three cases: the case of a finite state space, the case of master equation set on a Hilbert space, and finally on the set of probability measures, all in cases involving common noises. In the last case, the Lipschitz continuity we refer to is on the gradient of the value function with respect to the state variable of the player.
\end{abstract}
\tableofcontents

\section*{Introduction}
In this paper, we explain why there can be at most one "sufficiently smooth" solution of mean field games (MFG in short) master equations, in three different settings. We first consider the case of a MFG master equation associated to a finite state space model. In such a case, the master equation is a finite dimensional partial differential equation PDE (in short). We then present analogue results in the so-called Hilbertian approach, in which the master equation is a PDE set on a general Hilbert space; as well as a master equation associated to a MFG with a continuous state space. Of course the concept of a "sufficiently" smooth solution shall be detailed later on in the paper, and depends on the setting of the equation, but it turns out that Lipschitz continuity in space/measure is an important notion for the well posedness.\\

The mathematical study of MFG master equations have been initiated by the second and third authors. These equations are PDE, which can be written in either finite or infinite dimensional sets. They are an essential tool in the theory of MFG which are dynamic games involving non-atomic agents \citep{lasry2007mean}. The main advantage of master equations is that they allow to model a wide variety of games, namely ones involving so-called common noises. We refer the reader to \citep{lions2007cours,cardaliaguet2019master,carmona2018probabilistic,carmona2018probabilistic2,bertucci2021monotone,bertucci2021monotone2} for more details on the study of MFG master equations.\\

The main mathematical difficulty arising in the study of MFG master equations is that, in general, such equations create shocks, or discontinuities. For instance, one of the simplest and most famous PDE which can be a non trivial master equation is the Burger's equation. The so-called monotone regime has been identified in \citep{lasry2007mean}. It prevents shocks and allows to propagate some regularity for MFG master equations \citep{lions2007cours,cardaliaguet2019master,bertucci2021monotone2,bertucci2019some,bertucci2021master}. Hence in this monotone regime, one can study classical solutions of the problem on time intervals of arbitrary length. Moreover, a weaker notion of monotone solution has been introduced in \citep{bertucci2021monotone,bertucci2021monotone2} and studied in \citep{cardaliaguet2022monotone}. It allows to deal with non-classical solutions in the monotone regime. Let us also mention the work \citep{mou} which also deals with the monotone regime. Other regimes of well-posedness exist, such as the so called displacement monotone one, see for instance \citep{gangbo,zhang}. However no proper notion of weak solution exists in general.\\

Several results of well-posedness on short time horizons have been established, like for instance \citep{lions2007cours,carmona2018probabilistic2,cardaliaguet2022splitting,ambrose2021well}. In these results, the authors prove that if the time horizon is sufficiently small, then there always exists a unique classical solution of the master equation. This approach is close to the the one we adopt here. In some sense our results state that this idea can be pushed further, namely by using a weaker notion of solution. Moreover, we can characterize this maximal time of well-posedness in terms of the Lipschitz regularity of the solution.\\

In this paper, we establish the fact that, starting from a sufficiently regular initial condition, there exists exactly one solution of the master equation in a certain regularity class up to a certain time, which depends on the initial condition. We believe this fact to be quite general and to rely mostly on the form of the master equations. Even though it seems unfeasible to treat all possible master equations at once, we give three examples of settings in which this phenomenon happens to convince the reader of the generality of this approach. We end the paper by discussing the implications that such results may have as well as by giving some directions of extensions of this work.\\

Finally, let us insist on the fact that the approach we shall propose is clearly in the spirit of the Cauchy-Lispchitz (or Picard-Lindel\"of) theory of ordinary differential equations.\\

The rest of the paper is organized as follows. Section \ref{sec:finite} presents our approach on the more simple case of a finite state space. Section \ref{sec:hilbert} generalizes this approach to master equations set on Hilbert spaces and Section \ref{sec:c} treats the case of a continuous state space. Finally we have gathered additional remarks and perspectives in Section \ref{sec:add}.

\section{The case of a finite state space}\label{sec:finite}
For MFG with a finite ($d\geq 1$) number of states, the master equation generally takes the following form 
\be\label{mfgd}
\partial_t U(t,x) + \langle F(x,U),\nabla_q\rangle U(t,x) + \lambda (U(t,x) - (DS)^* U(t,Sx)) = G(x,U) \text{ in } (0,\infty) \times \mathcal{O},
\ee
\be\label{initd}
U(0,x) = U_0(x) \text{ in } \mathcal{O},
\ee
where $U : [0,\infty) \times \mathcal{O} \to \mathbb{R}^d$ and $\mathcal{O}$ is bounded domain of $\mathbb{R}^d$. The terms $F: \mathcal{O}\times \mathbb{R}^d\to \mathbb{R}^d$ and $G: \mathcal{O}\times \mathbb{R}^d\to \mathbb{R}^d$ model strategic interactions between the players, $U_0$ is an initial condition and the terms involving $\lambda> 0$ and $S : \mathcal{O} \to \mathcal{O}$ model common noise in the game. We refer to \citep{lions2007cours,bertucci2019some} for more details on such models. For the rest of this section, $\langle \cdot,\cdot \rangle$ denotes the euclidean scalar product of $\mathbb{R}^d$.
\begin{Rem}
In this setting, for most of the model studied in the literature, the variable $x$ stands for the repartition of players. This choice of variable may seem strange since $x$ denotes the state of a single player in lots of models. However, because in several key example of master equation this variable $x$ does not stand for a repartition of players \citep{bertucci2020mean,achdou2022class}, we made this choice. A more general statement is that quite often master equations can be of some importance in themselves, even though there is no proper underlying MFG.
\end{Rem}

Concerning the boundary conditions, we assume that $\mathcal{O}$ is smooth and that
\be\label{stabO}
\forall x \in \partial \mathcal{O}, \forall p \in \mathbb{R}^d, \langle \eta(x), F(x,p)\rangle \geq 0,
\ee
which ensures that no additional boundary condition is needed. In the previous assumption, $\eta(x)$ stands for the normal vector to $\partial \mathcal{O}$, at the point $x$, pointing outward. \begin{Rem}The smoothness of $\mathcal{O}$ is by no means important here. All the following can be extended to non smooth $\mathcal{O}$ (which is often the case in applications). However, we make this assumption to simplify the following discussion.
\end{Rem}
Accordingly, we also assume that $S$ is such that $S(\mathcal{O}) \subset \mathcal{O}$.

\subsection{The notion of Lipschitz solution for master equations}
The main remark at the origin of the notion of solution we propose here is the following: If we know the map $(t,x) \to (F(x,U(t,x)),G(x,U(t,x)))$, then computing the solution of \eqref{mfgd} reduces to solving linear transport equation. Hence, a solution of \eqref{mfgd} can be expressed a fixed point of the composition of solving the linear transport equation and evaluating $F$ and $G$.\\

We now give a notion of solutions of the linear transport equation at interest, which is by now standard, and refer to \citep{lions2022cours} for a presentation of recent developments on transport equations. Consider the following transport equation of solution $V : [0,\infty) \times \mathcal{O}\to \R^d$
\be\label{ld}
\partial_t V(t,x) - \langle B(t,x),\nabla_x\rangle V(t,x) = A(t,x) \text{ in } (0,\infty)\times \mathcal{O},
\ee
with initial condition
\be
V|_{t = 0}(x) = U_0(x) = \text{ in } \mathcal{O}.
\ee
In the previous, $A, B : (0,\infty) \times \mathcal{O}\to \R^d$ and $U_0 : \mathcal{O} \to \R^d$. The linear transport equation \eqref{ld} can be solved by using the method of characteristics that we now recall briefly. Let us assume that $B$ is, uniformly in $t$, Lipschitz continuous in $x$. This implies that the ordinary differential equation (ODE) 
\be\label{oded}
\frac{d}{ds} x(s) = B(t-s,x(s)),
\ee
has a unique solution given any initial condition $x\in \mathcal{O}$ and time interval $[-t,t]$. Furthermore, assume that $B$ satisfies \eqref{stabO} so that $\mathcal{O}$ is invariant by \eqref{oded}, i.e. $x(s) \in \mathcal{O}$ for all $s\in[0,t]$. The method of characteristics states that a classical solution $V$ of \eqref{ld} satisfies 
\be\label{chard}
V(t,x) = \int_0^t A(t-s,x(s))ds + U_0(x(t)).
\ee
Here, we shall use this relation as the notion of solution of \eqref{ld}.
\begin{Def}
Take a final time $T$, a time dependent vector field $B : [0,T] \times \mathcal{O} \to \R^d$ satisfying \eqref{stabO} as well as being, uniformly in $t$, Lipschitz continuous in $x \in \mathcal{O}$. For $A$ and $U_0$ continuous, the solution of \eqref{ld} is the function defined for every $(t,x) \in [0,T]\times \mathcal{O}$ by \eqref{chard}. In this case, we note
\be
V = \Psi(T,A,B,U_0).
\ee
\end{Def}
\begin{Rem}
The previous definition indeed defines a mapping $\Psi$ because we can consider a solution of \eqref{oded} from any initial condition and arbitrary time length.
\end{Rem}

We then provide the following definition of a Lipschitz solution of the master equation \eqref{mfgd}.
\begin{Def}\label{deflipd}
A Lipschitz solution $U$ of \eqref{mfgd}, on the time interval $[0,T)$, is a function such that 
\begin{itemize}
\item $U$ is Lipschitz in $x \in \mathcal{O}$, uniformly in $t \in [0,\alpha]$ for any $\alpha < T$.
\item For any $t < T$
\be
U = \Psi\bigg(t,G(\cdot,U) - \lambda(U - (DS)^*U\circ S),-F(\cdot,U),U_0\bigg).
\ee
\end{itemize}
\end{Def}
This, quite classical, definition of a solution of a non-linear PDE has the advantage that it allows us to define a concept of solution for merely Lipschitz (in the space variable) functions. The following result is immediate.

\begin{Prop}
A classical solution of \eqref{mfgd} is a Lipschitz solution of \eqref{mfgd} in the sense of Definition \ref{deflipd} on any time interval. Moreover a Lipschitz solution is a classical solution if it is smooth.
\end{Prop}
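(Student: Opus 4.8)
The plan is to prove the two implications separately, both by essentially unwinding the definitions. For the first implication, suppose $U$ is a classical solution of \eqref{mfgd} on some time interval. I would first check that $U$ is automatically Lipschitz in $x$, uniformly on compact time subintervals: since $U$ is $C^1$ in $x$ and $\mathcal{O}$ is bounded, $\nabla_x U$ is bounded on $[0,\alpha]\times\mathcal{O}$, which gives the required uniform Lipschitz bound. Next, set $B = -F(\cdot,U)$; by the assumed regularity of $F$ and the Lipschitz/boundedness of $U$, $B$ is Lipschitz in $x$ uniformly in $t$, and the sign condition \eqref{stabO} holds for $B$ by hypothesis on $F$, so the hypotheses of the Definition of $\Psi$ are met and the characteristic ODE \eqref{oded} is well posed with $\mathcal{O}$ invariant. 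Then I would verify the representation formula: fix $(t,x)$, let $x(\cdot)$ solve \eqref{oded} with $x(0)=x$, and differentiate $s \mapsto U(t-s, x(s))$ along the characteristic. Using the chain rule and the PDE \eqref{mfgd} (rewritten with $A = G(\cdot,U) - \lambda(U - (DS)^*U\circ S)$), this derivative equals $-A(t-s,x(s))$; integrating from $0$ to $t$ and using $x(t)$ as the endpoint yields exactly \eqref{chard}, i.e. $U = \Psi\bigl(t, G(\cdot,U) - \lambda(U-(DS)^*U\circ S), -F(\cdot,U), U_0\bigr)$. This is precisely the second bullet of Definition \ref{deflipd}, and the first bullet was checked above, so $U$ is a Lipschitz solution on any time interval on which it is classical.

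For the converse, suppose $U$ is a Lipschitz solution in the sense of Definition \ref{deflipd} and is in addition smooth (say $C^1$ in $(t,x)$). Then $B = -F(\cdot,U)$ is Lipschitz in $x$ uniformly in $t$ and satisfies \eqref{stabO}, so the characteristics are well defined and, crucially, are $C^1$ in their initial data by standard ODE theory. Starting from the integral identity \eqref{chard} that $U$ satisfies, I would differentiate in $t$ and along the flow: because $U$ is smooth, the right-hand side of \eqref{chard} is differentiable, and differentiating the relation $U(t,x) = \int_0^t A(t-s,x(s;t,x))ds + U_0(x(t;t,x))$ in $t$ (tracking the dependence of the characteristic on both the running time and the initial point) reproduces, after simplification, the PDE \eqref{mfgd} pointwise. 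Equivalently, one observes that along each characteristic the smooth function $s \mapsto U(t-s,x(s))$ has derivative $-A(t-s,x(s))$ by \eqref{chard}, and expanding that derivative with the chain rule gives $\partial_t U - \langle B, \nabla_x\rangle U = A$, which is \eqref{mfgd}. The initial condition $U(0,\cdot) = U_0$ is immediate from \eqref{chard} at $t = 0$.

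The only mild subtlety — and the step I would treat most carefully — is the interplay between the time variable $t$ appearing as the horizon and the backward time variable $s$ in the characteristic ODE \eqref{oded}, since the vector field there is $B(t-s,\cdot)$ and hence the flow depends on $t$ not merely through the elapsed time. Concretely, when differentiating \eqref{chard} in $t$ in the converse direction, one must be careful that the family of characteristics used at horizon $t$ is consistent with the one used at horizon $t+h$; the semigroup/consistency property of the characteristic flow (which follows from uniqueness of solutions of \eqref{oded}) is what makes this work, and it is exactly the content of the Remark following the Definition of $\Psi$. Everything else is a routine application of the method of characteristics, the chain rule, and the boundedness of $\mathcal{O}$; no structural assumption on $F$ or $G$ beyond what is already imposed is needed.
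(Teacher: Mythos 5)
Your proposal is correct and follows essentially the same route as the paper: both directions are handled by differentiating/integrating along the characteristics of \eqref{oded} and using the consistency (semigroup) property of the flow, exactly as in the paper's proof, with your version merely spelling out a few more of the routine verifications (uniform Lipschitz bound from $C^1$ on the bounded domain, well-posedness of the characteristic ODE, and the horizon-versus-elapsed-time bookkeeping).
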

\begin{proof}
\textit{First part of the claim.}The proof of this result is a simple computation. Considering a classical solution $U$. Take the time derivative of both $U$ and $\Psi(t,G(\cdot,U) - \lambda(U - (DT)^*U\circ T),F(\cdot,U),U_0)$. It is immediate to verify that they are actually the same, hence that $U$ is also a Lipschitz solution of \eqref{mfgd}.\\

\textit{Second part of the claim.} Consider a smooth Lipschitz solution of \eqref{mfgd} on $[0,T)$. Take $t > 0, x\in \mathcal{O}$ and denote by $(x(s))_{s \in [0,t]}$ the solution of 
\be
\frac{d x(s)}{ds} = -F(x(s),U(t-s,x(s)))
\ee
 with initial condition $x$. Observe now that, for $dt > 0$ sufficiently small
\be
U(t,x) - U(t-dt, x(dt)) = \int_0^{dt}G(x(s),U(t-s,x(s))) - \lambda(U(t-s,x(s)) - (DS(x(s)))^*U(t-s,Sx(s))) ds.
\ee
Hence dividing the previous relation by $dt$ and letting $dt \to 0$, we recover that $U$ indeed solves \eqref{mfgd}.
\end{proof}

\subsection{The main result on Lipschitz solutions}
In the spirit of the usual theory of ODE, we are able to prove the following result for such Lipschitz solutions.

\begin{Theorem}\label{thm:lipd}
Assume that $F$ and $G$ are locally Lipschitz functions, that $S$ is smooth and consider a Lipschitz initial condition $U_0 : \mathcal{O} \to \R^d$.
\begin{itemize}
\item There always exists a time $T > 0$ such that there exists a unique solution of \eqref{mfgd} in the sense of Definition \ref{deflipd} on $[0,T)$.
\item Moreover, there exists a maximal time $T^c \in [0,\infty]$ and a solution $U$ associated to $T^c$ such that, for any solution $V$ of the problem on an interval $[0,T]$: we have that $T \leq T^c$ and the restriction of $U$ to $[0,T)$ is equal to $V$.
\item If $T^c< \infty$, then $\|D_xU(t,\cdot)\|_{\infty} \to \infty$ as $t \to T^c$.
\end{itemize}
\end{Theorem}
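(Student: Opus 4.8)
First I would set up the functional-analytic framework. Fix $R>0$ larger than $\|U_0\|_\infty$ and let $K$ be the compact set on which $(x,q)$ ranges when $x\in\overline{\mathcal{O}}$ and $|q|\le R$; on $K$ the maps $F$ and $G$ are Lipschitz by hypothesis, with constants $L_F,L_G$. For a time $T$ to be chosen, work in the complete metric space
\[
X_T=\Bigl\{U\in C\bigl([0,T]\times\overline{\mathcal{O}};\R^d\bigr)\;:\;\|U\|_\infty\le R,\ \ \mathrm{Lip}_x\bigl(U(t,\cdot)\bigr)\le \Lambda\ \text{for all }t\le T\Bigr\},
\]
where $\Lambda$ is a second parameter chosen once and for all (e.g.\ $\Lambda=2(\mathrm{Lip}(U_0)+1)$). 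Define the operator $\Phi(U)=\Psi\bigl(t,G(\cdot,U)-\lambda(U-(DS)^*U\circ S),-F(\cdot,U),U_0\bigr)$. The key point is that for $U\in X_T$ the vector field $B=-F(\cdot,U(\cdot,\cdot))$ is Lipschitz in $x$ uniformly in $t$ (with constant $\le L_F\Lambda$) and satisfies \eqref{stabO} by assumption on $F$, so $\Psi$ is well defined and $\mathcal{O}$ is invariant under the characteristic ODE.

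Second, I would show $\Phi$ maps $X_T$ into itself for $T$ small. Using the representation \eqref{chard}, the sup-norm bound is immediate: $\|\Phi(U)\|_\infty\le \|U_0\|_\infty+T\bigl(\|G\|_{\infty,K}+2\lambda R\,\|(DS)^*\|_\infty\bigr)$, which is $\le R$ for $T$ small. For the Lipschitz-in-$x$ bound one differentiates the flow $x\mapsto x(s;x)$: standard Grönwall gives $\mathrm{Lip}\bigl(x\mapsto x(s;x)\bigr)\le e^{L_F\Lambda s}$, and combining this with the Lipschitz bounds on $A$, $U_0$, and $(DS)^*U\circ S$ (here the smoothness of $S$ enters, to control $DS$ and the composition), one gets $\mathrm{Lip}_x(\Phi(U)(t,\cdot))\le e^{CT}(\mathrm{Lip}(U_0)+T\cdot\text{const})$, which is $\le\Lambda$ for $T$ small. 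Then I would show $\Phi$ is a contraction: given $U_1,U_2\in X_T$, the difference of the characteristic curves is estimated by Grönwall (continuous dependence of ODEs on the vector field) to be $O\bigl(T\,e^{CT}\bigr)\|U_1-U_2\|_\infty$, and plugging this and the Lipschitz continuity of $F,G,(DS)^*$ into \eqref{chard} yields $\|\Phi(U_1)-\Phi(U_2)\|_\infty\le C(T+Te^{CT})\|U_1-U_2\|_\infty<\tfrac12\|U_1-U_2\|_\infty$ for $T$ small. Banach's fixed point theorem gives a unique fixed point in $X_T$, hence the first bullet — after a short argument that \emph{any} Lipschitz solution on $[0,T)$ automatically lies in $X_{T'}$ for $T'<T$ (its sup-norm and Lipschitz constant are locally bounded), so uniqueness in $X_T$ upgrades to uniqueness among all Lipschitz solutions once one reconciles the a priori constants; this is the slightly delicate bookkeeping step.

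Third, for the second bullet I would define $T^c$ as the supremum of times $T$ for which a Lipschitz solution on $[0,T)$ exists, and glue solutions: if $V_1$ on $[0,T_1)$ and $V_2$ on $[0,T_2)$ are two solutions with $T_1\le T_2$, the uniqueness statement applied on every $[0,\alpha]$ with $\alpha<T_1$ forces $V_1=V_2$ on $[0,T_1)$, so the family of all solutions is totally ordered by restriction and its union $U$ is a solution on $[0,T^c)$ with the stated maximality. For the third bullet (blow-up criterion) I argue by contradiction: if $T^c<\infty$ but $\liminf_{t\to T^c}\|D_xU(t,\cdot)\|_\infty=:\Lambda_0<\infty$, then along a sequence $t_n\to T^c$ the functions $U(t_n,\cdot)$ are uniformly Lipschitz and uniformly bounded (the sup-norm stays bounded because $\partial_tU$ is controlled by $F,G,U$ on the relevant compact via \eqref{chard}), hence one can restart the short-time existence argument from time $t_n$ with uniform constants $R,\Lambda$ depending only on $\Lambda_0$ — giving a solution on $[t_n,t_n+\delta)$ with $\delta>0$ independent of $n$ — and for $t_n$ close enough to $T^c$ this extends the solution beyond $T^c$, contradicting maximality.

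\textbf{The main obstacle I anticipate is the Lipschitz-in-$x$ bookkeeping in the self-mapping step and the restart step.} Keeping the sup-norm bound $R$ and the Lipschitz bound $\Lambda$ consistent — i.e.\ choosing $R,\Lambda$ first, then getting $T$ from them, and making sure the restart at $t_n$ uses constants controlled purely by $\Lambda_0$ and not by the (possibly growing) constants near $T^c$ — requires care; the nonlocal common-noise term $(DS)^*U\circ S$ mildly complicates the Lipschitz estimate but causes no essential trouble once $S$ is smooth and $S(\mathcal{O})\subset\mathcal{O}$. Everything else is routine Grönwall and Banach fixed point.
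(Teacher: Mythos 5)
Your proposal is correct and follows essentially the same route as the paper: a Picard iteration for the characteristics-based map $\Phi$ on a set of uniformly bounded, uniformly Lipschitz-in-$x$ functions, a contraction estimate in the sup norm obtained by Gr\"onwall on the difference of the characteristic flows, and a restart argument for the maximal time and the blow-up criterion. The only cosmetic difference is that you apply Banach's fixed point theorem directly on the closed set $X_T$ (with separate constants $R$ and $\Lambda$), whereas the paper runs the iteration in $E_C$ and passes to the limit via Ascoli--Arzel\`a; both are equivalent here.
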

\begin{Rem}
Let us insist on the fact that $\mathcal{O}$ is assumed to be bounded here, and hence that $U_0$ is bounded and that, for $ C>0$, $F$ and $G$ are Lipschitz continuous in $x \in \mathcal{O}$, uniformly in $|p|\leq C$. Cases in which the domain of the equation is unbounded are treated in the next section.
\end{Rem}
\begin{proof}
Consider $T > 0$ and introduce the mapping $\Phi$ defined by
\be
\begin{aligned}
\Phi : &E_{\infty} \to E_{\infty},\\
&U \to \Psi\bigg(T,G(\cdot,U) - \lambda(U - (DS)^*U\circ S),-F(\cdot,U),U_0\bigg),
\end{aligned}
\ee
where 
\be
E_C := \{U : [0,T)\times \mathcal{O} \to \R^d, \|U\|_{\infty} < C, \|D_x U\|_{\infty} < C\}.
\ee
\textbf{Step $1$: $\Phi$ is well defined.} First we want to show that $\Phi$ is well defined, i.e. that it takes its values in $E_{\infty}$. Take $U \in E_{\infty}$. The fact that $\Phi(U)$ is bounded is a direct consequence of \eqref{chard}. We now show that $\|D_x\Phi(U)\|_{\infty}< \infty$. In fact, we are going to show that as soon as $A,B$ and $U_0$ are Lipschitz in the space variable $x$, uniformly in $[0,T)$, then $\|D_x\Psi(T,A,B,U_0)\|_{\infty} < \infty$.

Let us denote by $\xi(t,s,x)$ the flow of the ODE \eqref{oded}. That is, given $t> 0$, an initial condition $x \in \mathcal{O}$, $\xi(t,s,x) = x(s)$ where $(x(s))_{s \geq 0}$ is the unique solution of \eqref{oded}. 

Differentiating \eqref{chard} with respect to $x$ yields for $t < T,x \in \mathcal{O}$
\be
D_x \Psi(T,A,B,U_0)(t,x) = \int_0^t D_x A(s,\xi(t,s,x))D_x\xi(t,s,x) ds + D_x U_0(x(t))D_x \xi(t,t,x).
\ee
Let us recall that, since $B$ is uniformly Lipschitz, the flow $\xi$ is Lipschitz in $x$, uniformly in $t,s$. Then the bound on $\|D_x \Psi(T,A,B,U_0)\|_{\infty}$ follows, from which we deduce that $\Phi(U) \in E_{\infty}$. Moreover, if $T$ is chosen sufficiently small, then there exists $C$ such that
\be
\sup_{t \leq t_1} \|D_xU\|_{\infty} \leq C \Rightarrow \sup_{t \leq t_1}\|D_x \Phi(U)\|_{\infty} \leq C.
\ee
Hence, if $T$ is sufficiently small and $C$ sufficiently large, $\Phi$ maps $E_C$ into itself.\\

\textbf{Step 2: $\Phi$ is a contraction.} We now show that $\Phi$ is a contraction on $E_C$ for the $\|\cdot\|_{\infty}$ norm, if $T$ is chosen small enough and $C$ is chosen large enough. From \eqref{chard}, we deduce that, for any $U,V \in E$, $t< T, x \in \mathcal{O}$
\be\label{eqcontract}
\begin{aligned}
(\Phi(U)& - \Phi(V))(t,x) = \int_0^t G(\xi_1(t,s,x),U(s,\xi_1(t,s,x))) - G(\xi_2(t,s,x),V(s,\xi_2(t,s,x)))ds \\
- \lambda &\int_0^t (DS(\xi_1(t,s,x)))^*U(t-s,T(\xi_1(s-t,x))) -  (DS(\xi_2(t,s,x)))^*V(t-s,S(\xi_2(t,s,x)))ds\\
&\quad \quad \quad + \lambda\int_0^tV(t-s,\xi_2(t,s,x)) - U(t-s,\xi_1(t,s,x)) ds,\\
&\quad \quad \quad + U_0(\xi_1(t,s,x)) - U_0(\xi_2(t,s,x)),
\end{aligned}
\ee
where $\xi_1$ and $\xi_2$ are the flows associated to respectively $U$ and $V$, that is $\xi_1(t,\cdot,\cdot)$ is the flow of the ODE
\be
\frac{d}{ds} x(s) = - F(x(s),U(t-s,x(s))),
\ee
while $\xi_2(t,\cdot,\cdot)$ is the flow of the same ODE when $U$ is replaced by $V$. To deduce the contraction property from \eqref{eqcontract}, it suffices to show that 
\be\label{eq15}
|\xi_1(t,s,x) -\xi_2(t,s,x)| \leq t C \|U-V\|_{\infty}.
\ee
Indeed if \eqref{eq15} holds, then we can bound the right hand side of \eqref{eqcontract} with
\be
\begin{aligned}
&t(C_{G}C \|U-V\|_{\infty} + C_G( \|U-V\|_{\infty} + \|D_xU\|_{\infty}C\|U-V\|_{\infty}))+\\
&+ \lambda t(\|D^2S\|_{\infty}\|U\|_{\infty}C \|U-V\|_{\infty} + \|DS\|_{\infty}^2( \|U-V\|_{\infty} + \|D_xU\|_{\infty}C\|U-V\|_{\infty}) )\\
& + \lambda t( \|U-V\|_{\infty} + \|D_xU\|_{\infty}C\|U-V\|_{\infty}) + \|D_xU_0\|_{\infty}tC\|U-V\|_{\infty},
\end{aligned}
\ee
where $C_G$ is the Lipschitz constant of $G$ on $\{(x,p) | x \in \mathcal{O}, |p|\leq C\}$. Hence, we deduce that for any $C > 0$ such that $\Phi(E_{C}) \subset \Phi(E_{C})$, there exists $T > 0$ such that $\Phi$ is a contraction on $E_{C}$ for the $\|\cdot\|_{\infty}$ norm. It then remains to show \eqref{eq15}. Let us compute, for $U,V \in E_C$
\be
\begin{aligned}
\left|\frac{d}{ds}(\xi_1(t,s,x) - \xi_2(t,s,x))\right| = &\left|F(\xi_1(t,s,x),U(t-s,\xi_1(t,s,x))) - F(\xi_2(t,s,x),V(t-s,\xi_2(t,s,x)))\right|\\
\leq& | F(\xi_1(t,s,x),U(t-s,\xi_1(t,s,x))) - F(\xi_1(t,s,x),V(t-s,\xi_1(t,s,x)))|\\
& + | F(\xi_1(t,s,x),V(t-s,\xi_1(t,s,x))) - F(\xi_1(t,s,x),V(t-s,\xi_2(t,s,x)))|\\
& + | F(\xi_1(t,s,x),V(t-s,\xi_2(t,s,x))) - F(\xi_2(t,s,x),V(t-s,\xi_2(t,s,x)))|\\
\leq & C_F \|U-V\|_{\infty} + C_F\|D_xV\|_{\infty}|\xi_1(t,s,x) - \xi_2(t,s,x)|\\
& + C_F|\xi_1(t,s,x) - \xi_2(t,s,x)|.
\end{aligned}
\ee
Defining $I(s) = |\xi_1(t,s,x) - \xi_2(t,s,x)|$, we recognize that it satisfies an inequality of the form
\be
\frac{d}{ds}I(s) \leq C( 1 + I(s)).
\ee
Hence, since $I(0) = 0$, we obtain from Gr\"onwall's Lemma that $I(s) \leq e^{Cs} -1$, from which \eqref{eq15} follows.\\

\textbf{Step $3$: Existence of a fixed point.} Let us now remark that for $C > 0$, there exists $L > 0$ such that for any $U \in E_C, \|\partial_t U\|_{\infty}\leq L$. Hence, $\Phi(E_C)$ is contained in subset of uniformly Lipschitz and bounded functions on $[0,T)\times \mathcal{O}$.\\

We now take $C>0$ large enough so that $\Phi(E_C) \subset E_C$ and $T>0$ small enough so that $\Phi$ is a contraction on $(E_C,\|\cdot\|_{\infty})$. Frollowing the classical proof of Picard's fixed point theorem, we deduce, using \textbf{Step $2$}, that for any $U \in E_C$, the sequence $(\Phi^n(U))_{n \geq 0}$ is a Cauchy sequence in $(E_C,\|\cdot\|_{\infty})$. Hence it converges, thanks to Ascoli-Arzela Theorem, to a limit $U^* \in E_C$. From the continuity of $\Phi$, we deduce that $U^*$ is indeed a fixed point of $\Phi$.\\

\textbf{Step $4$: Uniqueness of solutions and critical time of existence.} To prove the rest of the claim, let us consider
\be
T^c = \sup\{T > 0 | \exists \text{ a Lipschitz solution of \eqref{mfgd} on } [0,T)\},
\ee
and sequences $(T_n)_{n \geq 0}, (U_n)_{n \geq 0}$ such that for all $n \geq 0$, $U_n$ is a Lipschitz solution of \eqref{mfgd} on $[0,T_n)$ and $T_n \uparrow_{n \to \infty}T^c$. For $n \leq m$, consider $T_* = \inf \{t \in [0,T_n),\exists x \in \mathcal{O}, U_n(t,x)\ne U_m(t,x)\}$. If $T_* > -\infty$, then by using \textbf{Step $2$} on the initial condition $U_0(x) = U_n(t-\epsilon,x)$ for $\epsilon > 0$ sufficiently small, we arrive at a contradiction. Hence we deduce that all the Lipschitz solutions coincides up to time $T^c$.

Let us now assume that $T^c < \infty$ and that there exists $C > 0$ such that $\|D_xU_n\|_{\infty} \leq C$ for all $n \geq 0$. Consider $\delta > 0$. By applying \textbf{Step $3$} to the initial condition $U_n(T^c-\delta,x)$, for $n$ such that $T_n \geq T^c - \delta$, we deduce that there exists a time of existence of a solution $\epsilon > 0$, which is bounded from below by a constant which depends only on $C$, thanks to the computation of the previous part of the proof. Hence, choosing $\delta > 0$ sufficiently small, we arrive at a contradiction since this allows to extend the solution $U_n$ on $[0,T_n - \delta + \epsilon)$, while being a solution of \eqref{mfgd} on this time interval. Hence, we necessary have that $\lim_{t \to T^c}\|D_x U(t,\cdot)\|_{\infty} = + \infty$ (in the case $T^c < \infty$).

\end{proof}
\begin{Rem}
Obviously the previous proof is very much in the spirit of the standard theory of ODE. Maybe the main difference here is that we show that $\Phi$ is a contraction for the $\|\cdot\|_{\infty}$ norm while we need to verify that it is defined from a space of Lipschitz functions into itself.
\end{Rem}

\subsection{Comparison with the usual notion of characteristics for MFG}
Let us recall that, in the MFG community, the notion of characteristics associated to \eqref{mfgd} is in general a forward backward system. In this setting, this system takes the form
\be
\begin{cases}
\frac{d}{dt} Y(t) = G(y(t),Y(t)) \text{ for } t\in (0,T),\\
-\frac{d}{dt}y(t) = -F(y(t),Y(t)) \text{ for } t \in (0,T),
\end{cases}
\ee
which is often associated with boundary conditions of the form $Y(0) = U_0(y(0)), y(T) = x_0$, where $x_0 \in \mathcal{O}$, and $T > 0$.\\

In our approach, we do not need such complex forward-backward system of characteristics, since, in particular, we can deal with only the forward equation \eqref{oded}. We believe that this makes our approach easier to work with than most of the existing literature on the construction of solution of the master equation in short time intervals.

\subsection{Time regularity of Lipschitz solutions}
We now explain why Lipschitz solutions of \eqref{mfgd} are necessary locally Lipschitz in $t$. Even though this was present in the proof, we present the following argument as it is more intrinsic. To simplify the notation, we consider the case $\lambda = 0$, although this does not bear any importance. This fact is quite simple and maybe the best understanding is through the fact, once a bound exists on $D_xU$, we can read a bound on $\partial_t U$ on \eqref{mfgd}. A more rigorous approach consists in making the computation, for $U$ a Lipschitz solution of \eqref{mfgd} and $t \geq s > 0, x \in \mathcal{O}$, and $x(\cdot)$ the solution of \eqref{chard} with initial condition $x$
\be
\begin{aligned}
|U(t,x) - U(s,x)| &\leq |U(t,x) - U(s,x(t-s))| + |U(s,x(t-s)) - U(s,x)|\\
& \leq \left|\int_s^tG(x(s'), U(t-s',x(s')))ds'\right| + C|x(t-s) - x|\\
& \leq \left|\int_s^tG(x(s'), U(t-s',x(s')))ds'\right| + C\left|\int_0^{t-s}F(x(s'), U(t-s',x(s')))ds'\right|\\
&\leq C |t-s|.
\end{aligned}
\ee
From this computation, we obtain the
\begin{Prop}
Under the assumptions of Theorem \ref{thm:lipd}, consider the Lipschitz solution $U$ of \eqref{mfgd} on the maximal time of existence $T^c$. Then, for any $T < T^c$, $U$ is Lipschitz on $[0,T]\times \mathcal{O}$.
\end{Prop}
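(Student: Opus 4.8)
The plan is to exploit the representation formula \eqref{chard} that a Lipschitz solution must satisfy, and to estimate time increments directly, exactly as was already done implicitly inside the proof of Theorem \ref{thm:lipd}. Fix $T < T^c$. By Theorem \ref{thm:lipd}, the Lipschitz solution $U$ exists on $[0,T^c)$ and, by the very definition of $T^c$ together with the third bullet of that theorem, the quantity $C_1 := \sup_{t \leq T}\|D_x U(t,\cdot)\|_\infty$ is finite (if it were not, $T^c$ would have to equal $T < T^c$, a contradiction). Since $\mathcal O$ is bounded, $\|U\|_{L^\infty([0,T]\times\mathcal O)} =: C_0$ is finite too; consequently $F$ and $G$ are bounded on the compact set $\{(x,p): x\in\overline{\mathcal O},\ |p|\leq C_0\}$, say by $C_2$.

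First I would fix $0 \leq s \leq t \leq T$ and $x \in \mathcal O$, and let $(x(\sigma))_{\sigma\in[0,t-s]}$ denote the characteristic of \eqref{oded} with velocity field $-F(\cdot,U(t-\cdot,\cdot))$ issued from $x$; it stays in $\mathcal O$ by \eqref{stabO}. Then I would split
\be
U(t,x) - U(s,x) = \big(U(t,x) - U(s,x(t-s))\big) + \big(U(s,x(t-s)) - U(s,x)\big).
\ee
The first bracket is controlled by \eqref{chard}, since along the characteristic $U(t,x) - U(s,x(t-s)) = \int_s^t G(x(\sigma'),U(t-\sigma',x(\sigma')))\,d\sigma'$ (for $\lambda=0$; in general one also picks up the bounded zeroth-order term $-\lambda(U - (DS)^*U\circ S)$), so it is bounded by a constant times $|t-s|$ using the bound $C_2$ (and $C_0$, $\|DS\|_\infty$ for the common-noise term). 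The second bracket is bounded by $C_1\,|x(t-s)-x|$ by the spatial Lipschitz bound, and $|x(t-s) - x| \leq \int_0^{t-s}|F(x(\sigma'),U(t-\sigma',x(\sigma')))|\,d\sigma' \leq C_2\,|t-s|$. Adding the two gives $|U(t,x)-U(s,x)| \leq C\,|t-s|$ with $C$ depending only on $C_0,C_1,C_2,\lambda,\|DS\|_\infty$, uniformly in $x$. Combined with the already-known uniform-in-$t$ spatial Lipschitz bound $C_1$, this yields that $U$ is (jointly) Lipschitz on $[0,T]\times\overline{\mathcal O}$, which is the assertion.

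There is essentially no serious obstacle here; the only point requiring a little care is making sure the constant $C$ does not blow up as $T \uparrow T^c$ — but we have deliberately fixed $T$ strictly below $T^c$, so $C_1$ is finite and everything is uniform on $[0,T]$. One should also note that this argument only gives local-in-time Lipschitz regularity (on each $[0,T]$ with $T<T^c$), not up to $T^c$, which is consistent with the blow-up of $\|D_xU(t,\cdot)\|_\infty$ at $T^c$ when $T^c<\infty$; I would state this limitation explicitly. The passage from the separate space/time Lipschitz bounds to a joint one is the elementary inequality $|U(t,x)-U(s,y)| \leq |U(t,x)-U(s,x)| + |U(s,x)-U(s,y)| \leq C|t-s| + C_1|x-y|$, which I would mention but not belabor.
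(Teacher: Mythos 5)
Your proof is correct and follows essentially the same route as the paper: the same decomposition $U(t,x)-U(s,x) = (U(t,x)-U(s,x(t-s))) + (U(s,x(t-s))-U(s,x))$ along the characteristic, the bound on the first term via the representation formula \eqref{chard} and on the second via the uniform spatial Lipschitz constant together with $|x(t-s)-x|\leq \int_0^{t-s}|F|$. The only (harmless) difference is that you spell out the treatment of the $\lambda$-term and the passage to a joint Lipschitz bound, while the paper works with $\lambda=0$ and leaves these details implicit; note also that the finiteness of $\sup_{t\leq T}\|D_xU(t,\cdot)\|_\infty$ for $T<T^c$ follows directly from Definition \ref{deflipd}, without needing to invoke the blow-up criterion.
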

\begin{Rem}
To avoid repeating the same argument multiple times, we shall not present this result for the next cases, although they actually hold. Maybe the only point of difference is that in general, the Lipschitz regularity in time is only local in space, which does not appear since $\mathcal{O}$ is bounded.
\end{Rem}

\subsection{A strong-weak like uniqueness result}
Even though there does not exist a general notion of weak solution of MFG master equations, we explain why the existence of a Lipschitz solution may yield uniqueness of solutions in a wider class of notion of solutions. To present this idea, we show that any limit $V$ of a sequence of smooth functions $(V_{\epsilon})_{\epsilon > 0}$ which are almost solution of \eqref{mfgd}, is actually equal to the Lipschitz solution of \eqref{mfgd} $U$, when $U$ exists of course. Once again, we are in this section in the case $\lambda = 0$, to simplify notation. We can prove the following.

\begin{Prop}
Consider a Lipschitz solution $U$ of \eqref{mfgd} on the time interval $[0,T^c)$. Assume that, for any $T \in (0,T^c)$, there exists $C >0$ such that for any $\epsilon > 0$, any classical solution $V_{\epsilon}$ of
\be
\begin{aligned}
\left|\partial_t V_{\epsilon} + \langle F(x,V_{\epsilon}),\nabla_x\rangle V_{\epsilon} - G(x,V_{\epsilon})\right| \leq \epsilon \text{ in } [0,T)\times \mathcal{O},\\
|V_{\epsilon}(0,x) - U_0(x)| \leq \epsilon \text{ in } \mathcal{O}.
\end{aligned}
\ee
Then the following holds for some constant $C$ depending only on $\epsilon, T, U_0, F$ and $G$
\be
\sup_{t \leq T, x \in \mathcal{O}}|U(t,x) - V_{\epsilon}(t,x)| \leq C \epsilon.
\ee
\end{Prop}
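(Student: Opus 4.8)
The plan is to adapt the contraction-type estimate from \textbf{Step 2} of the proof of Theorem \ref{thm:lipd}, but now comparing the exact Lipschitz solution $U$ with the approximate solution $V_\epsilon$, keeping track of the $O(\epsilon)$ defects. First I would use the assumed a priori bound: since $\|V_\epsilon\|_\infty \leq C$ and (after possibly enlarging $C$) $\|D_x V_\epsilon\|_\infty \leq C$ uniformly in $\epsilon$ on $[0,T]$, and $U$ itself is bounded and Lipschitz in $x$ on $[0,T]$, both functions live in a common set $E_C$ on which $F$ and $G$ are Lipschitz with a uniform constant. The key preliminary observation is that $V_\epsilon$ is an \emph{approximate} fixed point of $\Psi$: integrating the perturbed PDE along the characteristics $\tilde\xi$ generated by the field $-F(\cdot,V_\epsilon)$ gives
\be
V_\epsilon(t,x) = \int_0^t \big(G(\tilde\xi(t,s,x),V_\epsilon(s,\tilde\xi(t,s,x))) + r_\epsilon(s,\tilde\xi(t,s,x))\big)ds + U_0^\epsilon(\tilde\xi(t,t,x)),
\ee
where $\|r_\epsilon\|_\infty \leq \epsilon$ and $\|U_0^\epsilon - U_0\|_\infty \leq \epsilon$; this is just the method-of-characteristics identity \eqref{chard} applied to the linear equation solved by $V_\epsilon$.

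Next I would subtract this identity from the fixed-point identity $U = \Psi(t,G(\cdot,U),-F(\cdot,U),U_0)$ satisfied by the Lipschitz solution, exactly as in \eqref{eqcontract} with $\lambda=0$. This produces a difference $U(t,x)-V_\epsilon(t,x)$ expressed as an integral of: (i) differences $G(\xi,U(\xi))-G(\tilde\xi,V_\epsilon(\tilde\xi))$, which are controlled by $C_G(|U-V_\epsilon|_\infty + (1+\|D_xU\|_\infty)|\xi-\tilde\xi|)$; (ii) the defect terms $r_\epsilon$, contributing at most $t\epsilon$; and (iii) the endpoint term $U_0(\xi(t,t,x))-U_0^\epsilon(\tilde\xi(t,t,x))$, bounded by $\|D_xU_0\|_\infty|\xi-\tilde\xi| + \epsilon$. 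The flow difference $|\xi(t,s,x)-\tilde\xi(t,s,x)|$ is estimated exactly as in \eqref{eq15}: writing $I(s)$ for this quantity, the same Grönwall argument based on the Lipschitz bounds for $F$ in both arguments gives $I(s)\leq e^{Cs}-1 \cdot$ (const)$\,\|U-V_\epsilon\|_{\infty,[0,s]}$, so $\sup_{s\leq t}I(t,s,x)\leq C\,t\,\sup_{s\leq t}\|U(s,\cdot)-V_\epsilon(s,\cdot)\|_\infty$ for $t$ bounded. Setting $\phi(t) := \sup_{s\leq t,\, x}|U(s,x)-V_\epsilon(s,x)|$ and collecting all terms yields a Grönwall-type inequality
\be
\phi(t) \leq C_1\epsilon + C_2\int_0^t \phi(s)\,ds,
\ee
with $C_1,C_2$ depending only on $T$, $C$, and the Lipschitz data of $F,G,U_0$ (through $\|D_xU\|_\infty$ on $[0,T]$, which is finite by the previous Proposition). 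Grönwall's lemma then gives $\phi(T)\leq C_1 e^{C_2 T}\epsilon$, which is the claim.

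The main obstacle is making the flow comparison work even though here, unlike in \textbf{Step 2}, the two characteristic systems are driven by $F(\cdot,U)$ and $F(\cdot,V_\epsilon)$ where $V_\epsilon$ is only an \emph{approximate} solution, not another genuine solution; one must be careful that the bound $|\xi-\tilde\xi|$ is controlled by $\|U-V_\epsilon\|_\infty$ on the relevant time slice (which requires $\|D_xV_\epsilon\|_\infty$, not $\|D_xU\|_\infty$, to appear in one of the three triangle-inequality terms) together with a closed Grönwall loop in $\phi$. A secondary technical point is justifying the characteristics identity for $V_\epsilon$: since $V_\epsilon$ is a classical solution of the \emph{perturbed} equation, $\frac{d}{ds}V_\epsilon(t-s,\tilde\xi(t,s,x))$ computes to $-G - r_\epsilon$ exactly along $\tilde\xi$, so integrating is legitimate and no extra regularity beyond what is assumed is needed. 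Everything else is a bookkeeping exercise strictly parallel to Steps 1–2 of Theorem \ref{thm:lipd}.
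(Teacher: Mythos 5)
There is a genuine gap, and it sits exactly at the point the Proposition is designed to make: you assume at the outset that "$\|D_x V_{\epsilon}\|_\infty \leq C$ uniformly in $\epsilon$ on $[0,T]$ (after possibly enlarging $C$)", and later you insist that $\|D_xV_{\epsilon}\|_{\infty}$ "must appear in one of the three triangle-inequality terms" of the flow comparison. No such uniform bound is among the hypotheses: $V_{\epsilon}$ is only required to be a classical solution of the perturbed equation with an $\epsilon$-close initial datum, and the whole point of this strong--weak statement (made explicit in the Remark that follows it: the constant in \eqref{lasteqprop} does not depend on $V_{\epsilon}$) is that the regularity burden falls entirely on the Lipschitz solution $U$. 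Each individual $V_{\epsilon}$ is Lipschitz because it is $C^1$ on the compact $\overline{\mathcal{O}}\times[0,T]$ — enough to define its characteristics $\tilde\xi$ — but its Lipschitz constant may blow up as $\epsilon\to 0$, so any estimate in which $\|D_xV_{\epsilon}\|_\infty$ enters the Gr\"onwall constant does not yield the claimed conclusion. Your argument, as written, therefore proves a strictly weaker statement.

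The good news is that your claim of necessity is wrong, and the route can be repaired: in both the flow comparison and the $G$-term you may split as
\be
\left|F(\xi,U(\xi)) - F(\tilde\xi,V_{\epsilon}(\tilde\xi))\right| \leq \left|F(\xi,U(\xi)) - F(\tilde\xi,U(\tilde\xi))\right| + \left|F(\tilde\xi,U(\tilde\xi)) - F(\tilde\xi,V_{\epsilon}(\tilde\xi))\right|,
\ee
so that the spatial increment is always charged to $U$ (cost $C_F(1+\|D_xU\|_\infty)|\xi-\tilde\xi|$) and the remaining term costs $C_F\|U-V_{\epsilon}\|_\infty$; the same for $G$ and for the endpoint term via $\|D_xU_0\|_\infty$. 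With that rearrangement your Gr\"onwall loop closes with constants depending only on $U$, $U_0$, $F$, $G$, $T$, as required (one still has to handle the zeroth-order term $Ct\,\phi(t)$ coming from the endpoint, e.g.\ by iterating over short time intervals). Note that the paper's own proof is structurally different and sidesteps the issue entirely: it evaluates at a maximum point $x^*$ of $U^j-V^j$, runs both characteristics from that common point over an infinitesimal horizon $\kappa$, uses the maximality to replace the increment $V^i(t,x^*)-V^i(t,y(\kappa))$ by $U^i(t,x^*)-U^i(t,y(\kappa))$, and lets $\kappa\to 0$ so that the flow discrepancy is first order in $\kappa$ and controlled by $\|U-V_\epsilon\|_\infty$ alone; this yields a differential inequality for $\sup_{j,x}(U^j-V^j)$ in which only the Lipschitz constant of $U$ ever appears.
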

\begin{proof}
To simplify notation we omit the index $\epsilon$ on $V$. Let us consider $t > 0$ and $i \in \{1,...,d\}, x^* \in \mathcal{O}$ such that 
\be
U^i(t,x^*) - V^i(t,x^*) = \max_{j,x}\{U^j(t,x) - V^j(t,x)\}
\ee Consider $\kappa > 0$ and the solution $x(\cdot)$ of the ODE
\be\label{ode12}
\frac{d}{ds}x(s) =- F(x(s),U(t+ \kappa -s,x(s))),
\ee
with initial condition $x^*$ and $y(\cdot)$ the solution of the ODE
\be\label{ode22}
\frac{d}{ds}y(s) = -F(y(s),V(t+ \kappa -s,y(s))),
\ee
with also initial condition $x^*$. Let us now compute
\be\label{eq2er}
\begin{aligned}
U^i&(t+ \kappa, x^*) - V^i(t+ \kappa, x^*) - U^i(t,x^*) + V^i(t,x^*)=\\
&=  U^i(t+ \kappa, x^*) - U^i(t,x(\kappa))  - V^i(t+ \kappa, x^*) + V^i(t,y(\kappa))\\
& \quad + U^i(t,x(\kappa))- U^i(t,x^*) + V^i(t,x^*) - V^i(t,y(\kappa))\\
& \leq \int_0^{\kappa} G^i(s,U(t+\kappa -s,x(s))) - G^i(s,V(t + \kappa -s, y(s))) + \epsilon ds\\
&+ U^i(t,x(\kappa)) - U^i(t,y(\kappa)).
\end{aligned}
\ee
Let us now remark that, since $U$ is uniformly Lipschitz, we deduce that
\be
|U^i(t,x(\kappa)) - U^i(t,y(\kappa))| \leq C |x(\kappa) - y(\kappa)|.
\ee
From \eqref{ode12} and \eqref{ode22} we finally deduce that
\be
|U^i(t,x(\kappa)) - U^i(t,y(\kappa))| \leq C \kappa \|U-V\|_{\infty}.
\ee
Hence, dividing by $\kappa$ and taking the limit $\kappa \to 0$ in \eqref{eq2er}, we obtain
\be
\frac{d}{dt}(U^i(t, x^*) - V^i(t, x^*)) \leq C \|U(t,\cdot) - V(t,\cdot) \|_{\infty} + \epsilon.
\ee
From this, we obtain that
\be
\frac{d}{dt}\|U(t,\cdot) - V(t,\cdot)\|_{\infty} \leq C  \|U(t,\cdot) - V(t,\cdot) \|_{\infty} + \epsilon.
\ee
Hence, we deduce from Gr\"onwall's Lemma that
\be\label{lasteqprop}
\|U(t,\cdot) - V(t,\cdot)\|_{\infty} + \epsilon \leq 2\epsilon e^{Ct},
\ee
from which the Proposition immediately follows.
\end{proof}
\begin{Rem}
From the previous result we can indeed deduce that the Lipschitz solutions attracts the limits of approximations of \eqref{mfgd}, since the constant $C$ in \eqref{lasteqprop} does not depend on $V_{\epsilon}$.
\end{Rem}

 \section{The Hilbertian case}\label{sec:hilbert}
In the so-called Hilbertian approach, introduced by the third author, the typical form of a MFG master equation with common noise, is 
\be\label{mfgHn}
\partial_t U(t,X) -\sum_{i=1}^{\infty} \lambda_i\partial_{ii}U(t,X)+ \langle F(X,U), \nabla\rangle U(t,X) =G(X,U) \text{ in } (0,\infty)\times H,
\ee
 where $(H, \langle \cdot,\cdot \rangle)$ is a real separable Hilbert space, with an orthonormal family $(e_i)_{i \geq 1}$  and $F,G : H\times H \to H$. In the previous, the term $ \langle F(x,U), \nabla\rangle U $ is understood in the sense that we are taking the Gateaux derivative of $U$ in the direction $F(x,U)$ and $\partial_{ii}$ refers to the second order derivative with respect to $e_i$.
 
 The master equation \eqref{mfgHn} is related to more "classical" master equations set on the space of probability measures. A typical example of the latter is written below in equation \eqref{mfgcc}. The two approaches are concerned with the same model when:
 \be
 \begin{aligned}
 &H = L^2(\Omega, \R^d), B = D_p H\\
 &\sigma = \sigma' = 0, \lambda_i = \sigma_0 \mathbb{1}_{i \leq d}, F(X,U) = D_pH(X,U),\\
  &G(X,U) =  - D_x H(X,U,\mathcal{L}(X)),
  \end{aligned}
  \ee
  where $(\Omega, \mathcal{A},\mathbb{P})$ is a standard probability space and $\mathcal{L}(X)$ denotes the law of the random variable $X$, and when the first $d$ components of $(e_i)_{i \geq 1}$ are the component of the canonical basis of $\R^d$. In this case, we expect that given classical solutions $U$ and $\mathcal{U}$ of respectively \eqref{mfgHn} and \eqref{mfgcc}, the following holds for all $X \in L^2(\Omega, \R^d)$,
  \be\label{lift}
  \nabla_x \mathcal{U}(t,X,\mathcal{L}(X)) = U(t,X).
  \ee
  
 \subsection{The deterministic case}
 As the theory of ODE in Hilbert spaces does not raise any particular difficulty, we can easily adapt the results of the previous part to the so-called deterministic equation (i.e. the case $\forall i,\lambda_i = 0$)
 \be\label{mfgH}
 \partial_t U + \langle F(x,U), \nabla\rangle U = G(x,U) \text{ in } (0,\infty)\times H,
 \ee
 \be\label{initH}
 U(0,x) = U_0(x) \text{ in } H.
 \ee
 In tis context, there is also an associated linear transport equation. Given two vectors fields $A,B : [0,T)\times H \to H$, we say that $V:[0,T)\times H \to H$ is a solution of the linear transport equation
 \be
 \partial_t V - \langle B(t,x), \nabla\rangle V = A(t,x) \text{ in } (0,T)\times H
 \ee
 if $V$ satisfies for all $t \in [0,T), x \in H$
 \be\label{charh}
V(t,x) = \int_0^t A(t-s,\xi(t,s,x))ds + U_0(\xi(t,t,x)),
 \ee
 where $\xi(t,\cdot,\cdot)$ is the flow of the ODE
 \be
 \frac{d}{ds}x(s) = B(t-s,x(s)).
 \ee
 Note that, as in the finite state space case, the flow is well defined as soon as $B$ is Lipschitz in $x$, uniformly in $t$ for instance. Hence in this case, the formula \eqref{charh} makes sense as soon as $A$ and $U_0$ are continuous for instance. We also define the operator $\Psi$ with $\Psi(T,A,B,U_0)$ is the function given by \eqref{charh}. As in the previous section we can define a notion of Lipschitz solution and give a result of existence and uniqueness.
 \begin{Def}\label{defliph}
 A Lipschitz solution $U$ of \eqref{mfgH} on the time interval $[0,T)$ is a function such that 
\begin{itemize}
\item $U$ is Lipschitz in $x \in H$, uniformly for $t \in [0,\alpha]$ for any $\alpha < T$.
\item For any $t < T$
\be
U = \Psi\bigg(t,G(\cdot,U) ,-F(\cdot,U),U_0\bigg).
\ee
\end{itemize}
 \end{Def}
 \begin{Theorem}\label{thm:hd}
 Assume that $F$ and $G$ are Lipschitz functions. For any Lipschitz initial condition $U_0 : H \to H$:
 \begin{itemize}
 \item There always exists a time $T > 0$ such that there exists a unique solution of \eqref{mfgd} in the sense of Definition \ref{defliph}. 
 \item There exists a maximal time $T^c \in [0,\infty]$ and a solution $U$ associated to $T^c$ such that, for any solution $V$ of the problem on an interval $[0,T]$: we have that $T \leq T^c$ and the restriction of $U$ to $[0,T)$ is equal to $V$.
 \item If the maximal time $T^c$ is such that $T^c< \infty$, then $\|D_x U(t)\|_{\infty} \to \infty$ as $t\to T^c$.
 \end{itemize}
 \end{Theorem}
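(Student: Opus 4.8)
The plan is to rerun the proof of Theorem \ref{thm:lipd} essentially verbatim; the single genuinely new feature is that $H$ is unbounded, which forces us to replace the sup-norm $\|\cdot\|_\infty$ by the weighted sup-norm $\|W\|_* := \sup_{t<T,\,x\in H}\frac{|W(t,x)|}{1+|x|}$ that keeps track of the (at most linear) growth in $x$. The global Lipschitz continuity of $F$ and $G$ assumed in the statement is what makes this replacement work: if $U(t,\cdot)$ is $K$-Lipschitz for every $t$, then $x\mapsto -F(x,U(t,x))$ and $x\mapsto G(x,U(t,x))$ are globally Lipschitz with constant bounded by a function of $K$ and the Lipschitz constants of $F,G$, and they grow at most linearly; hence the ODE $\frac{d}{ds}x(s)=-F(x(s),U(t-s,x(s)))$ has a flow $\xi$ defined for all times, satisfying $|\xi(t,s,x)|\le C_1(1+|x|)$ and $\mathrm{Lip}_x\,\xi(t,s,\cdot)\le e^{C_2 s}$ on $[0,T]$, with $C_1,C_2$ depending only on $T$, $K$ and the data $F,G$. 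In particular the operator $\Psi$ of \eqref{charh}, and therefore $\Phi(U):=\Psi\big(t,G(\cdot,U),-F(\cdot,U),U_0\big)$, is well defined on the class of uniformly-in-$t$ Lipschitz functions.

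I would then set up the fixed point on the complete metric space $(E_C,\rho)$, where $E_C:=\{U:[0,T)\times H\to H:\ \|U\|_*\le C,\ \sup_t\|D_xU(t,\cdot)\|_\infty\le C\}$ and $\rho(U,V):=\|U-V\|_*$ (completeness: a $\rho$-Cauchy sequence converges pointwise in $H$, and both bounds defining $E_C$ pass to the limit). Differentiating \eqref{charh} in the Gateaux sense exactly as in Step $1$ of the proof of Theorem \ref{thm:lipd} gives $\|D_x\Phi(U)(t,\cdot)\|_\infty \le L_0 e^{C_2T}+ T C_2 e^{C_2T}$, where $L_0$ is the Lipschitz constant of $U_0$, and a parallel computation gives $\|\Phi(U)\|_*\le |U_0(0)|+L_0 C_1 + T\,(\cdots)$; so choosing first $C$ large (depending only on $L_0$, $|U_0(0)|$ and the data) and then $T$ small yields $\Phi(E_C)\subset E_C$. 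For the contraction, denoting by $\xi_1,\xi_2$ the flows attached to $U,V\in E_C$, the estimate for $\frac{d}{ds}(\xi_1-\xi_2)$ is the one in the proof of Theorem \ref{thm:lipd}, except that the term $|U(t-s,\xi_1)-V(t-s,\xi_1)|$ is now bounded by $\|U-V\|_*(1+|\xi_1|)\le C_3(1+|x|)\rho(U,V)$; Grönwall's Lemma then yields the weighted analogue of \eqref{eq15}, namely $|\xi_1(t,s,x)-\xi_2(t,s,x)|\le C_4\,T\,(1+|x|)\,\rho(U,V)$ on $[0,T]$. Inserting this into the analogue of \eqref{eqcontract} (here only the $G$-term and the $U_0$-term appear, as there is no common-noise term) and using the Lipschitz bounds on $G$ and $U_0$ gives $\rho(\Phi(U),\Phi(V))\le C_5\,T\,\rho(U,V)$, so $\Phi$ is a $\rho$-contraction for $T$ small. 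Banach's fixed point theorem then produces the unique Lipschitz solution on $[0,T)$, proving the first item.

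For the last two items I would argue exactly as in Step $4$ of the proof of Theorem \ref{thm:lipd}. Set $T^c:=\sup\{T>0:\ \exists\text{ a Lipschitz solution of \eqref{mfgH} on }[0,T)\}$ and take solutions $U_n$ on $[0,T_n)$ with $T_n\uparrow T^c$; applying the local uniqueness just obtained to the initial condition $U_n(t-\epsilon,\cdot)$ shows that any two Lipschitz solutions agree on the intersection of their domains, which glues the $U_n$ into a maximal solution $U$ on $[0,T^c)$. Suppose $T^c<\infty$ and $\limsup_{t\to T^c}\|D_xU(t,\cdot)\|_\infty=:C_*<\infty$. A Grönwall estimate along the characteristics (starting the characteristic at $(t,0)$ and using that $F,G$ are globally Lipschitz) bounds $\int_0^t|U(s,0)|\,ds$, hence $\sup_{t<T^c}|U(t,0)|$, in terms of $C_*$, $T^c$ and the data; together with $\|D_xU(t,\cdot)\|_\infty\le C_*$ this gives $\sup_{t<T^c}\|U(t,\cdot)\|_*<\infty$. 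Consequently the datum $U(T^c-\delta,\cdot)$ lies in a fixed ball $E_C$ with $C$ independent of $\delta$, so by the first two steps the local existence time starting from it is bounded below by some $\epsilon_0=\epsilon_0(C,F,G)>0$ independent of $\delta$; taking $\delta<\epsilon_0$ extends the solution past $T^c$, contradicting the definition of $T^c$. Hence $\|D_xU(t,\cdot)\|_\infty\to\infty$ as $t\to T^c$ whenever $T^c<\infty$.

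The only delicate point is bookkeeping: one must check that every constant entering the growth and Lipschitz estimates depends only on the Lipschitz constant and the weighted norm of the current datum together with the fixed data $F,G,U_0$, so that the blow-up alternative of the last step is genuinely an alternative for $\|D_xU(t,\cdot)\|_\infty$ alone. This is precisely where the global — rather than merely local, as in Theorem \ref{thm:lipd} where $\mathcal{O}$ was bounded — Lipschitz continuity of $F$ and $G$ is used, and it is the reason why, as claimed, the unbounded domain does not raise any particular difficulty.
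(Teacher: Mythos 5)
Your proof is correct and follows essentially the same route as the paper: the paper likewise reduces to the finite-state argument by replacing $E_C$ with a class of functions having at most linear growth in $x$ (its version of your weighted norm) together with a uniform spatial Lipschitz bound. Your explicit use of the weighted sup-norm $\|\cdot\|_*$ as the contraction metric, and of completeness of $(E_C,\rho)$ in place of Ascoli--Arzel\`a (which is unavailable in infinite dimensions), is a clean implementation of what the paper only sketches.
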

\begin{proof}
This proof is very similar to the one in the finite state space case, hence we only the detail the main difference which is the fact that we consider an unbounded initial condition here. Since we assumed that $F$ and $G$ are globally Lipschitz, the only point where we need some uniform estimates on Lipschitz solutions is when we consider the set $E_C$ on which we want to use some fixed points results. In this setting, it is natural to consider the set defined by
\be\label{defEC}
E_C = \left\{U : [0,T)\times H \to U, \sup_{R > 0}\sup_{t \in [0,T), |x|\leq R}R^{-1}|U(t,x)| < C, \|D_XU\|_{\infty} < C \right\}.
\ee
The argument of the previous proof can be carried on on those sets simply by remarking that now, all the functional convergence shall be locally uniformly in $x \in H$.
\end{proof}
As an application of the previous result, let us consider the case of the master equation
\be
\partial_t U + \langle U, \nabla_x\rangle U = 0 \text{ in } (0,\infty)\times H,
\ee
with initial condition $U_0(x) = A(x)$ for some linear operator $A$ such that $A^* = A$. Remark that the solution of this master equation is simply given by $U(t,x) = A(t)x$, where $(A(t))_{t \geq 0}$ is the solution of 
\be
\frac{d}{dt}A(t) + A(t)^2 = 0.
\ee
Hence we are here in the situation $T^c < \infty$ except in the cases in which $A \geq 0$, which corresponds to the monotone regime which is known to propagate Lipschitz regularity for MFG master equation.

\subsection{The case of common noise}
We now turn to master equations involving a common noise, which is slightly more involved than the previous one. Moreover it will help us to understand how to extend this mathematical analysis to the case of master equations set on the space of probability measures. Recall that we are interested in the master equation 
\be\label{mfgHn}
\partial_t U(t,x) - \sum_{i=1}^{\infty} \lambda_i\partial_{ii}U(t,x)+ \langle F(x,U), \nabla\rangle U(t,x) =G(x,U) \text{ in } (0,\infty)\times H.
\ee
The presence of second order terms in this master equation imposes to use stochastic characteristics of the associated linear transport equation. Indeed, consider the linear (possibly degenerate) parabolic equation
 \be
 \partial_t V - \sum_{i = 1}^{\infty}\lambda_i\partial_{ii}V - \langle B(t,x),\nabla\rangle V = A(t,x) \text{ in } (0,\infty) \times H,
 \ee
 with initial condition
 \be
 V|_{t = 0}(x) = U_0(x) \text{ in } H,
 \ee
 where $A,B: [0,\infty)\times H \to H$ and $U_0:H \to H$. Let us consider, for $t > 0$ the stochastic differential equation (SDE in short) in $H$ that we write component wise on the family $(e_i)_{i \geq 1}$
 \be\label{sde}
 dX^i_s = B^i(t-s,X_s)ds + \sqrt{2 \lambda_i} dW^i_s,
 \ee
where $(W^i)_{1 \leq i }$ is a collection of independent real Brownian motions on $(\Omega, \mathcal{A},\mathbb{P})$. In the case in which $B$ is Lipschitz in $x \in H$, uniformly in $t$, the SDE is well defined, it even admits strong solutions. In this context, it is thus meaningful to introduce the following Feynman-Kac representation formula for $t< T, x \in H$
 \be
 V(t,x) = \mathbb{E}\left[\int_0^t A(t-s,X_s)ds + U_0(X_t) | X_0 = x\right],
 \ee
 where $(X_s)_{s \geq 0}$ is of course the solution of \eqref{sde}, conditioned here to take initial value $x$. The function $V$ defined by this formula is denoted once again by
 \be
 V = \Psi(T, A,B,U_0).
 \ee
 This remarks naturally leads us to the
 \begin{Def}\label{defhc}
 Given a time $T > 0$, a function $U: [0,T) \times H \to H$  is called a Lipschitz solution of \eqref{mfgHn} if
 \begin{itemize}
 \item $U$ is Lipschitz in $x \in H$, uniformly in $t \in [0,\alpha]$ for any $\alpha \in [0,T)$,
 \item For any $t < T$,
 \be
 U = \Psi(t,G(\cdot,U),-F(\cdot,U),U_0).
 \ee
 \end{itemize}
 \end{Def}
 As in the previous cases, we can establish the following Cauchy-Lipschitz like result.
 \begin{Theorem}\label{thmhc}
Assume that $F$ and $G$ are Lipschitz functions and that $\sum_{i = 1 }^{\infty}\lambda_i < \infty$. For any Lipschitz initial condition $U_0$:
\begin{itemize}
\item There always exists a time $T> 0$ such that there exists a unique solution of \eqref{mfgHn} in the sense of Definition \ref{defhc}. 
\item There exists a maximal time $T^c \in [0,\infty]$ and a solution $U$ associated to $T^c$ such that, for any solution $V$ of the problem on an interval $[0,T)$: we have that $T \leq T^c$ and the restriction of $U$ to $[0,T)$ is equal to $V$.
\item If $T^c< \infty$, then $\|D_x U(t)\|_{\infty}\to \infty$ as $t \to T^c$.
\end{itemize}
 \end{Theorem}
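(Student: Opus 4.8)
The plan is to mimic exactly the Picard-type scheme used for Theorems \ref{thm:lipd} and \ref{thm:hd}, the only new feature being that the deterministic characteristic flow is replaced by the stochastic flow of the SDE \eqref{sde}. For $T>0$ and $C>0$ I would work on a set $E_C$ of the type \eqref{defEC} (functions Lipschitz in $x$ uniformly in $t$, with controlled linear growth; no boundary condition is needed since $H$ is the whole space) and on the map $\Phi(U) = \Psi\big(T,G(\cdot,U),-F(\cdot,U),U_0\big)$, where $\Psi$ is now defined through the Feynman--Kac formula attached to \eqref{sde}. Before anything else one has to make sense of that formula: since $\sum_i\lambda_i<\infty$, the series $\sum_i\sqrt{2\lambda_i}\,W^i_s e_i$ defines a genuine $H$-valued (trace class) Wiener process, so as soon as $B$ is Lipschitz in $x$ uniformly in $t$ the SDE \eqref{sde} has a unique strong solution in $H$, with $\mathbb{E}|X_s|^2\le C(1+|x|^2)$ and $\mathbb{E}|X_s-x|^2\le Cs(1+|x|^2)$ by Gr\"onwall's lemma.

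The first step would be to check that $\Phi$ is well defined and maps $E_C$ into itself for $T$ small. Here I would exploit that the noise in \eqref{sde} is \emph{additive}: the first variation process $D_xX_s$ then solves a pathwise linear ODE, $\frac{d}{ds}D_xX_s = D_xB(t-s,X_s)D_xX_s$, with no stochastic integral, so $\|D_xX_s\|\le e^{Ls}$ almost surely, $L$ being the Lipschitz constant of $B$ in $x$. Differentiating the Feynman--Kac formula under the expectation then yields a bound on $\|D_x\Psi(T,A,B,U_0)\|_{\infty}$ whenever $A$ and $U_0$ are Lipschitz in $x$ uniformly in $t$; combining it with $\mathbb{E}|X_s|\le C(1+|x|)$ and the linear growth of $G(\cdot,U)$ and of $U_0$ gives $\Phi(U)\in E_C$ and, for $C$ large and $T$ small, $\Phi(E_C)\subset E_C$, exactly as in Step~1 of the proof of Theorem \ref{thm:lipd}.

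The second step is the contraction estimate. Given $U,V\in E_C$, I would couple the two SDEs by driving them with the same Brownian motions; then in the difference $X^1_s-X^2_s$ of the two flows the additive noise cancels, so $X^1-X^2$ solves a pathwise ODE and the computation leading to \eqref{eq15} applies verbatim, giving $|X^1_s-X^2_s|\le Cs\|U-V\|_{\infty}$ (locally uniformly in $x$, in the sense of the proof of Theorem \ref{thm:hd}). Feeding this into the Feynman--Kac representation of $\Phi(U)-\Phi(V)$ and using the Lipschitz bounds on $F,G$ together with the uniform bounds of $E_C$ produces $\|\Phi(U)-\Phi(V)\|_{\infty}\le CT\|U-V\|_{\infty}$, a contraction for $T$ small, so the contraction mapping principle on the complete space $E_C$ yields the Lipschitz solution on $[0,T)$. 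The third and last step — uniqueness on overlapping intervals, construction of the maximal time $T^c$ by gluing, and the blow-up $\|D_xU(t)\|_{\infty}\to\infty$ when $T^c<\infty$ — is identical to Step~4 of the proof of Theorem \ref{thm:lipd}: if the Lipschitz seminorm stayed bounded near $T^c$ one could restart Steps~1--2 from a time close to $T^c$ and extend past $T^c$.

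I expect the only real obstacle, relative to the already treated deterministic cases, to be the infinite-dimensional stochastic analysis, and specifically pinning down the exact role of the hypothesis $\sum_i\lambda_i<\infty$: it is precisely what makes the driving noise an $H$-valued process and what gives $X_s$ and its first variation the moment and Lipschitz bounds used above (if $\sum_i\lambda_i=\infty$ the noise would leave $H$). Beyond that, the stochastic calculus is essentially harmless here, because the noise is additive and shared by the coupled flows, so every flow comparison collapses pathwise onto the deterministic Gr\"onwall arguments of the previous sections.
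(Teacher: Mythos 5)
Your proposal is correct and follows essentially the same route as the paper: the Feynman--Kac definition of $\Psi$ via the SDE \eqref{sde}, the set $E_C$ of \eqref{defEC}, the observation that the additive (trace-class, thanks to $\sum_i\lambda_i<\infty$) noise cancels in pathwise differences of coupled flows so that the deterministic Gr\"onwall arguments apply almost surely, the contraction for small $T$, and the standard gluing/blow-up argument. The only cosmetic difference is that you differentiate the representation formula via the first variation process where the paper estimates finite differences $|\Phi(U)(t,x)-\Phi(U)(t,y)|$ directly; these are equivalent.
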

 \begin{proof}
 Consider $T > 0$, a bounded Lipschitz function $U_0$ and the function $\Phi$ defined by
 \be
 \Phi(U) = \Psi(T,G(\cdot,U),-F(\cdot,U),U_0).
 \ee
 Let us consider the set $E_C$ for $C > 0$, defined in \eqref{defEC}.\\
 
\textbf{Step 1: $\Phi$ is well defined.} Consider a function $U : [0,T)\times H \to H$ which is, uniformly in $t$, Lipschitz in $x\in H$. Then, uniformly in $t$, it is also the case for $x \to F(x,U(t,x))$ and $x \to G(x,U(t,x))$. This implies that, given $t > 0$ and an initial condition, the SDE
\be
dX^i_s = -F^i(X_s,U(t-s,X_s))ds + \sqrt{2 \lambda_i} dW^i_s,
\ee
is well defined. Denote by $(X^x_s)_{t \in [0,T)}$ the strong solution of the previous SDE with initial conditions $x\in H$.
Remark that, for any $x \in H$, $(X^x_s)_{s \geq 0}$ satisfies 
\be\label{eq:moment}
\begin{aligned}
\mathbb{E}[|X_s^x|] &\leq C \int_0^s\mathbb{E}[|X_{s'}|]ds' + \mathbb{E}\left[\left(\sum_i2 \lambda_i (W^i_s)^2\right)^{\frac12}\right]\\
&\leq C \int_0^s\mathbb{E}[|X_{s'}|]ds' + C(1 + \sum_{i}\lambda_i).
\end{aligned}
\ee
Hence $\sup_{s\geq 0} \mathbb{E}[|X_s^x|] < \infty$ and thus the function $\Phi$ is indeed well defined on the set of functions $U$ such that $\|D_x U\|_{\infty} < \infty$. It is valued in $E$.\\

\textbf{Step 2: $\Phi(E_C) \subset E_C$ for $C$ large enough and $T$ small enough.}
From \eqref{eq:moment}, we immediately deduce that if $T$ is small enough and $C$ is large enough, then $U \in E_C \Rightarrow \|\Phi(U)\|_{\infty} \leq C$. Let us now compute for $x,y \in H$ and $t > 0$
\be\label{eq:20}
\begin{aligned}
\left| \Phi(U)(t,x) - \Phi(U)(t,y) \right| =& \left |\mathbb{E}\left[ \int_0^t G(X^x_{s},U(t-s,X^x_{s})) - G(X^y_{s},U(t-s,X^y_{s}))ds \right]\right|\\
&+\left|\mathbb{E}\left[ U_0(X^x_t) - U_0(X^y_t)\right] \right|\\
\leq& C\mathbb{E}[|X^x_t- X_t^y|] + \mathbb{E}\left[ \int_0^t C|X^x_s - X^y_s| ds\right].
\end{aligned}
\ee
The following estimate holds almost surely
\be
d(X^x_t - X^y_t) \leq \|D_XF\|_{\infty}|X^x_t - X^y_t| + \|D_pF\|_{\infty}\|D_x U\|_{\infty}|X^x_t - X^y_t|
\ee
Hence we deduce from Gr\"onwall's Lemma and from \eqref{eq:20} that $D_x \Phi(U)$ is uniformly bounded and that, choosing $T$ sufficiently small, we can guarantee that there exists $C> 0$ such that
\be
\sup_{t \leq T} \|D_X U(t)\|_{\infty} \leq C \Rightarrow \sup_{t \leq T} \|D_X \Phi(U)(t)\|_{\infty} \leq C.
\ee 

\textbf{Step 3: $\Phi$ is a contraction if $T$ is small enough.} Consider $U,V \in E_C$, and let us compute for $t \leq T, x \in H$
\be\label{eq:21}
\begin{aligned}
&\left| \Phi(U)(t,x) - \Phi(V)(t,x) \right| =\\
&=  \left | \mathbb{E}\left[ U_0(X_t)- U_0(\tilde{X}_t) +\int_0^t G(X_s,U(t-s,X_s)) - G(\tilde{X}_s,V(t-s,\tilde{X}_s))ds\right] \right|,\\
\end{aligned}
\ee
where $(X_s)_{s \in [0,T]}$ is the strong solution of \eqref{sde} with initial condition $x$ and $(\tilde{X}_s)_{s \in [0,T]}$ is the strong solution of \eqref{sde} with initial condition $x$ when $U$ has been replaced by $V$. Hence, almost surely, we have the estimate
\be
d|X_s - \tilde{X}_s| \leq \|D_xF\|_{\infty}|X_s - \tilde{X}_s| + \|D_pF\|_{\infty}(\|U-V\|_{\infty} + \|D_xU\|_{\infty}|X_s- \tilde{X}_s|).
\ee
From this estimate, we immediately deduce from \eqref{eq:21} that $\Phi$ is indeed a contraction for the $\|\cdot\|_{\infty}$ norm if $T$ is small enough. The rest of the proof follows exactly the same argument as the proof of Theorem \ref{thm:lipd}.

 \end{proof}



 \section{Master equations on the set of probability measures}\label{sec:c}
\subsection{Setting and notation}
 In this section, we address master equations set on the space of probability measures in cases with and without common noise. We start by presenting the case of the master equation without common noise and we turn to the general case later on. Hence we study first the PDE
 \be\label{mfgc}
 \begin{aligned}
\partial_t &U(t,x,m) + H(x,\nabla_x U(t,x,m),m) -\sigma \Delta_x U(t,x,m)\\
&+\int_{\mathbb{R}^d}B(y,\nabla_x U(t,y,m))D_mU(t,x,m,y)m(dy)\\
&- \sigma' \int_{\mathbb{R}^d}\text{div}_y(D_mU(t,x,m,y)) dm(y) =  0 \text{ in } (0,\infty)\times \mathbb{T}^d\times \mathcal{P}(\mathbb{T}^d),
 \end{aligned}
 \ee
 \be\label{initc}
 U(0,x,m) = U_0(x,m) \text{ in } \mathbb{T}^d\times \mathcal{P}(\mathbb{T}^d).
 \ee
 Here, $\mathbb{T}^d$ is the $d$ dimensional torus, $\mptd$ is the set of probability measures on $\mathbb{T}^d$, $H: (x,p,m) \to \R$ and $B:(x,p,m) \to \R^d$ are given functions, $\sigma,\sigma' > 0$ are constants.
 \begin{Rem}
 In a lot of cases studied in the literature, $B$ is equal to $D_pH$ and $\sigma = \sigma'$. Because these assumptions play no role here, we remove them, just as it was the case in the previous sections.
 \end{Rem} 
  \begin{Rem}
The choice of the $d$ dimensional torus $\mathbb{T}^d$ as the state space does not play any particular role except the one of simplifying the formulation of some statements in the following. Moreover, the setting at hand is sometimes refers to as the one of "extended MFG" as in \citep{lions2020extended}. 
 \end{Rem}
 \begin{Rem}
 The same study could be carried on if the coefficients $\sigma$ and $\sigma'$ depend on $x$ and $m$, provided that this dependence is sufficiently smooth, but we do not consider this case to keep the following more understandable.
 \end{Rem}
 The derivatives with respect to the measure argument are defined in the following way. For a function $F: \mptd \to \R$, when it is defined, we denote for $m \in \mptd, x \in \T^d$
 \be
 \nabla_m F(m,x) = \lim_{h \to 0}\frac{F((1-h)m + h \delta_x) - F(m)}{h},
 \ee
 where $\delta_x$ is the Dirac mass at $x$. Furthermore, when it is defined, we note for $m\in \mptd, x \in \T^d$,
 \be
 D_m F(m,x) = \nabla_x \nabla_mF(m,x).
 \ee
 Let us remark that if $\nabla_m F(m,\cdot)$ and $D_mF(m,\cdot)$ are well defined, then for $m,m' \in \mptd$, $\phi : \T^d \to \R^d$
 \be
 \int_{\T^d}\nabla_mF(m,x)(m' - m)(dx) = \lim_{h \to 0}\frac{F((1-h)m + h m') - F(m)}{h},
 \ee
 \be
 \int_{\mathbb{T}^d}D_mF(m,x)\cdot\phi(x)m(dx) = \lim_{h \to 0}\frac{F((Id + h \phi)_{\#}m) - F(m)}{h},
 \ee
 where $T_{\#}m$ denotes the image measure of the mesure $m$ by the map $T$.\\

In all the following, we are going to equip $\mptd$ with the Monge-Kantorovich distance $\textbf{d}_1$ defined by
\be
\textbf{d}_1(m,m') = \sup_{f, \|\nabla_xf\|_{\infty} \leq 1} \int_{\T^d}fd(m - m').
\ee
 This distance is a metric for the weak convergence of measures. Moreover, it is a norm when extended to the set of measures on $\T^d$. Considering a function $F: \T^d \times \mptd \to \R^d$, we also introduce the notation
 \be
 \text{Lip}(F) = \sup_{x,y,m}|x-y|^{-1}|F(x,m) - F(y,m)| + \sup_{x,m,m'}\textbf{d}_1(m,m')^{-1}|F(x,m)-F(x,m')|.
 \ee
  \begin{Rem}
Having chosen this distance, adapting the following results to master equations which are set on sets of measures with different masses is quite immediate. Even though we are not going to discuss anymore this fact, we believe it is worth insisting on the fact that the choice of $\textbf{d}_1$ is particularly natural because of all the MFG in which the mass of players does not remain constant.
 \end{Rem}

 Several approaches can be taken here. We present one in details and sketch a second one later on. This first approach is hinted by the previous section which suggests to consider the PDE satisfies by the function $W(t,x,m) := \nabla_x U(t,x,m)$ where $U$ is the value function of the MFG. If the value function $U$ is indeed a (say classical) solution of \eqref{mfgc}, then its spatial gradient $W$ is a solution of
 
 \be\label{mfggrad}
 \begin{aligned}
\partial_t &W+ D_pH(x,W,m)\cdot\nabla_x W  -\sigma \Delta_x W(t,x,m)\\
&+\int_{\mathbb{R}^d}B(y,W(t,y,m),m)D_mW(t,x,m,y)m(dy)\\
&- \sigma' \int_{\mathbb{R}^d}\text{div}_y(D_mW(t,x,m,y)) dm(y) = - \nabla_x H(x,W,m) \text{ in } (0,\infty)\times \mathbb{T}^d\times \mathcal{P}(\mathbb{T}^d),
 \end{aligned}
 \ee
 The transport equation naturally associated to this previous nonlinear PDE is simply
   \be\label{lineargrad}
 \begin{aligned}
 \partial_t &V(t,x,m) -b(t,x,m)\cdot\nabla_x V(t,x,m) -\sigma \Delta_x V(t,x,m)\\
&+\int_{\mathbb{R}^d}F(t,x,m)\cdot D_mV(t,x,m,y)m(dy)\\
&- \sigma' \int_{\mathbb{R}^d}\text{div}_y(D_mV(t,x,m,y)) dm(y) = A(t,x,m) \text{ in } (0,\infty)\times \mathbb{T}^d\times \mathcal{P}(\mathbb{T}^d),
 \end{aligned}
  \ee
 where $b,B,A : [0,\infty)\times \mathbb{T}^d\times \mptd \to \R^d$ are given vector fields.\\
 
 As in the previous settings, the solutions of the transport equation \eqref{lineargrad} are naturally given by a representation formula. Moreover, from the presence of a second order term in $x$, the evolution equation associated to this variable is stochastic. This does not raise any particular mathematical difficulty. The associated Feynman-Kac representation formula is then
 \be\label{repgrad}
 \forall t \in [0,T), x \in \mathbb{T}^d, m_* \in \mptd, V(t,x,m_*) = \mathbb{E}\left[ \int_0^t A(t-s,X_s,m(s)) ds + U_0(X_t,m(t))\right],
 \ee
 where $(X_s,m(s))_{s \geq 0}$ is the solution of the following SDE-PDE system
 \begin{equation} \label{sdepdegrad}
 \begin{aligned}
 dX_s &= b(t-s,X_s,m(s))ds + \sqrt{2 \sigma}dW_s \text{ for } s\leq t,\\
 \partial_s m &= -\text{div}(F(t-s,x,m(s))m(s)) + \sigma' \Delta_x m \text{ in } (0,t)\times \mathbb{T}^d,
 \end{aligned}
 \ee
 with initial conditions $x$ and $m_*$, where $(W_s)_{s\geq 0}$ is a $d$ dimensional Brownian motion on a standard probability space $(\Omega, \mathcal{A},\mathbb{P})$. Let us recall that the link with equation \eqref{lineargrad} can be observed by computing
 \be
(dt)^{-1} V(t,x,m_*) - \mathbb{E}[V(t-dt,X_{dt},m(dt))]
 \ee
 and letting $dt\to 0$.
 
 We denote by $\Psi$ the operator which is defined by the representation formula \eqref{repgrad}. That is, the function $V$ given by the right hand side of \eqref{repgrad} is denoted by $V = \Psi(T,b,F,A,U_0)$. 
 
 \subsection{Main definition and result}
 We can now easily state the following definition of solution of \eqref{mfggrad}.
  \begin{Def}\label{def:lipgrad}
 Given an initial condition $U_0$, a Lipschitz solution of \eqref{mfggrad} on $[0,T)$ is a function $W : [0,T) \times \mathbb{T}^d\times \mptd \to \R^d$ such that
 \begin{itemize}
 \item $W$ is Lipschitz in $x,m$, uniformly in $[0,t]$ for any $t < T$.
 \item For any $t < T$, 
 \be
 W = \Psi(t, D_pH(x,W,m), B(x,W,m), - D_xH(x,W,m),U_0).
 \ee
 \end{itemize}
 \end{Def}
 \begin{Rem}
 In this context, $W$ is not asked to be a gradient in $x$.
 \end{Rem}
We can produce the same type of result for this notion of solution.
\begin{Theorem}\label{thm:grad}
Assume that:
 \begin{itemize}
 \item The function $H$ is such that $D_xH$ and $D_pH$ are globally Lipschitz functions.
 \item The function $B$ is a globally Lipschitz function.
 \end{itemize}
 Then, for any initial condition $U_0$ such that $\nabla_x U_0$ is Lipschitz:
 \begin{itemize}
\item There always exists a time $T> 0$ such that there exists a unique solution $W$ of \eqref{mfgc} in the sense of Definition \ref{def:lipgrad}.
\item There exists a maximal time $T^c \in [0,\infty]$ and a solution $W$ associated to $T^c$ such that, for any solution $V$ of the problem on an interval $[0,T]$: we have that $T \leq T^c$ and the restriction of $W$ to $[0,T)$ is equal to $V$.
\item If $T^c < \infty$, then $\text{Lip}(W(t,\cdot,\cdot)) \to \infty$ as $t \to T^c$.
  \end{itemize}
\end{Theorem}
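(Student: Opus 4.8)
The plan is to reproduce, in this measure‑dependent setting, the Picard‑iteration scheme used for Theorems \ref{thm:lipd}, \ref{thm:hd} and \ref{thmhc}. Fix $T>0$ and, for $C>0$, let $E_C$ be the set of $W:[0,T)\times\T^d\times\mptd\to\R^d$ with $\|W\|_\infty<C$ and $\mathrm{Lip}(W)<C$ uniformly in $t$; since $\T^d\times\mptd$ is compact for $\textbf{d}_1$, this is a set of bounded, equi‑Lipschitz functions. Define $\Phi(W)=\Psi\big(T,D_pH(\cdot,W,\cdot),B(\cdot,W,\cdot),-D_xH(\cdot,W,\cdot),U_0\big)$, i.e. $\Phi(W)(t,x,m_*)=\mathbb{E}\big[\int_0^t -D_xH(X_s,W(t-s,X_s,m(s)),m(s))\,ds + U_0(X_t,m(t))\big]$ where $(X_s,m(s))$ solves \eqref{sdepdegrad} with $b=D_pH(\cdot,W,\cdot)$, $F=B(\cdot,W,\cdot)$ and data $(x,m_*)$, and where under the hypotheses the terminal datum is bounded Lipschitz on the compact set $\T^d\times\mptd$. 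A Lipschitz solution in the sense of Definition \ref{def:lipgrad} is exactly a fixed point of $\Phi$ lying in some $E_C$.

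The heart of the proof is the following preliminary step, that the characteristic flow of \eqref{sdepdegrad} is well posed and Lipschitz. The measure component decouples: $m(\cdot)$ solves $\partial_s m=-\mathrm{div}(B(x,W(t-s,x,m(s)),m(s))m)+\sigma'\Delta_x m$, and since $x\mapsto B(x,W(t-s,x,m),m)$ is Lipschitz uniformly in $(t,s,m)$ on $E_C$, this is the Fokker--Planck equation of the McKean--Vlasov SDE $dY_s=B(Y_s,W(t-s,Y_s,m(s)),m(s))\,ds+\sqrt{2\sigma'}\,d\tilde W_s$ with $\mathcal L(Y_0)=m_*$, which admits a unique solution with $m(s)=\mathcal L(Y_s)$. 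Coupling two such SDEs driven by the same Brownian motion, with an optimal coupling of the initial laws, a pathwise Grönwall estimate yields $\textbf{d}_1(m(s),m'(s))\le e^{Cs}\textbf{d}_1(m_*,m'_*)$ and, when $W$ is replaced by $V$, $\textbf{d}_1(m(s),\tilde m(s))\le Cs\,\|W-V\|_\infty$ for $s\le T$, with $C$ depending only on $\mathrm{Lip}(B)$ and $C$ (through $\|D_xW\|_\infty$). Feeding $m(\cdot)$ back in, the SDE for $X$ has a Lipschitz drift, hence a unique strong solution, and the same pathwise Grönwall arguments — now using $\mathrm{Lip}(D_pH)$ — control $\mathbb E|X_s-X'_s|$ by $|x-x'|$ and $\textbf{d}_1(m_*,m'_*)$, and $\mathbb E|X_s-\tilde X_s|$ by $\|W-V\|_\infty$ (via $\sup_{s\le T}\textbf{d}_1(m(s),\tilde m(s))$). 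Compactness of $\T^d$ makes all moment bounds automatic, so $\Phi$ is well defined.

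Given this, the remaining steps are routine. First, boundedness of $\Phi(W)$ is immediate from the representation formula; differentiating it in $x$ and in $m$ (the latter in the $\textbf{d}_1$ sense, using the Lipschitz dependence of the flow on $(x,m_*)$ just established) gives $\mathrm{Lip}(\Phi(W))<\infty$ and, for $T$ small, $\mathrm{Lip}(W)\le C\Rightarrow\mathrm{Lip}(\Phi(W))\le C$, so $\Phi(E_C)\subset E_C$ for $C$ large and $T$ small. For the contraction, one expands $\Phi(W)-\Phi(V)$ as in \eqref{eqcontract}/\eqref{eq:21}, splits the integrand into the differences of the arguments of $D_xH$ (resp. $U_0$) at the two flows, and bounds it by $\mathrm{Lip}(D_xH)$ (resp. $\mathrm{Lip}(U_0)$) times $\big(\mathbb E|X_s-\tilde X_s|+\|W-V\|_\infty+\textbf{d}_1(m(s),\tilde m(s))\big)$, each term being $\le Ct\,\|W-V\|_\infty$ by the previous paragraph; hence $\|\Phi(W)-\Phi(V)\|_\infty\le CT\,\|W-V\|_\infty$. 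Since $\|\partial_tW\|_\infty\le L(C)$ is read off the representation formula, the iterates $(\Phi^n(W))_n$ are Cauchy in $(E_C,\|\cdot\|_\infty)$ and converge, by Ascoli--Arzelà on the compact $[0,T]\times\T^d\times\mptd$ and continuity of $\Phi$, to a fixed point in $E_C$: this is the first item. Uniqueness and the characterization of $T^c$ then follow verbatim from Steps $2$ and $4$ of the proof of Theorem \ref{thm:lipd} — two Lipschitz solutions agree on the largest interval where both are Lipschitz, else the contraction restarted at an interior time is contradicted; and if $T^c<\infty$ with $\sup_{t<T^c}\mathrm{Lip}(W(t,\cdot,\cdot))<\infty$, the local existence result restarted near $T^c$ has a uniformly positive lifespan, contradicting maximality — giving $\mathrm{Lip}(W(t,\cdot,\cdot))\to\infty$ as $t\to T^c$.

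The main obstacle is the preliminary step: showing that the measure flow $m_*\mapsto m(\cdot)$ of the continuity/Fokker--Planck equation in \eqref{sdepdegrad} is well posed and Lipschitz in $\textbf{d}_1$, both in the initial measure and in the driving field $W$, uniformly over $E_C$. Interpreting it as a McKean--Vlasov SDE and using a synchronous coupling plus Grönwall is the cleanest route, and is precisely where the choice of $\textbf{d}_1$ is essential, since $\textbf{d}_1$ is dominated by the $L^1$‑cost of any coupling of the two laws. A secondary technical point is justifying the differentiation of the Feynman--Kac formula \eqref{repgrad} with respect to $m$ in the $\textbf{d}_1$ sense, which again reduces to the Lipschitz (in fact differentiable) dependence of the flow on its initial data.
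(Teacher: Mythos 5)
Your proposal is correct and follows the same global architecture as the paper's proof: the Picard iteration $\Phi=\Psi(T,D_pH(\cdot,W,\cdot),B(\cdot,W,\cdot),-D_xH(\cdot,W,\cdot),U_0)$ on the set $E_C$, stability of $E_C$ for $C$ large and $T$ small, contraction for the $\|\cdot\|_{\infty}$ norm, Ascoli--Arzel\`a for the fixed point, and the restart-near-$T^c$ argument for uniqueness and blow-up of $\mathrm{Lip}(W(t,\cdot,\cdot))$. The one place where you genuinely diverge is the key stability lemma for the measure component of the characteristics. The paper isolates this as the estimate $\textbf{d}_1(m_1(s),m_2(s))\le C\int_0^t\|b_1-b_2\|_{\infty}\,ds$ for two Fokker--Planck equations with frozen drifts, proved in the appendix by duality against the backward adjoint parabolic equation (so it needs a uniform Lipschitz bound on the dual test function, supplied by standard parabolic estimates), and then closes the measure-dependence of the drift by a separate Gr\"onwall step to get \eqref{estmm}. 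You instead lift the continuity equation to a McKean--Vlasov SDE and run a synchronous coupling with an optimal coupling of the initial laws, obtaining both the dependence on $m_*$ and on the driving field $W$ in one pathwise Gr\"onwall, using that $\textbf{d}_1$ is dominated by the $L^1$ cost of any coupling. Both routes are valid under the stated Lipschitz hypotheses; the coupling argument is more self-contained (no parabolic regularity theory for the dual equation) and, as you note, transfers immediately to the common-noise equation \eqref{mfggradc} by driving both copies with the same common Brownian motion, whereas the paper handles that case by a separate translation trick. The duality argument, on the other hand, is stated for general bounded Lipschitz drifts without reference to any stochastic representation, which is convenient when the drift is not of McKean--Vlasov form. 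Your remarks on compactness of $\T^d\times\mptd$ for $\textbf{d}_1$ and on reading the time-Lipschitz bound off the representation formula correctly fill in the points the paper leaves implicit.
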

\begin{proof}
The proof of this result follows the same line of argument as the previous ones. Consider a time $T > 0$, the map 
\be
\begin{aligned}
\Phi(W) = \Psi(T,D_pH(x,W), B(x,W,m),  - D_xH(x,W,m),U_0).
\end{aligned}
\ee
and the set 
\be
E_C := \{ W : [0,T]\times \mathbb{T}^d\times \mptd \to \R^d, \|W\|_{\infty} \leq C,\sup_{t \leq T}\text{Lip}(W(t,\cdot,\cdot)) \leq C\}.
\ee
\textbf{Step $1$: $\Phi$ is well defined.} Take $W \in E_{\infty}$. Setting $b(t,x,m) = -D_pH(x,W(t,x,m))$ and $F(t,x,m) = B(x,U(t,x,m),m)$, we remark that the system \eqref{sdepdegrad} always has strong solutions for $t < T$. Hence from the continuity of $\nabla_x U_0$ and $\nabla_x H$, we deduce that $\Phi$ is indeed well defined.\\

Let us now recall a standard estimate on Fokker-Planck equations. Consider $m_1$ and $m_2$ solutions of
\be
\begin{aligned}
\partial_s m_i - \sigma' \Delta_x m_i + \text{div}(b_i(s,x)m_i) = 0 \text{ in } (0,T)\times \mathbb{T}^d \text{ for } i =1,2,\\
m_i|_{s = 0} = µ,
\end{aligned}
\ee
where $b_1,b_2 : [0,T) \times \mathbb{T}^d \to \R^d$ are both bounded in $s,x$, and Lipschitz continuous in $x$, uniformly on $[0,t]$ for $t < T$. Then, for any $s \leq t < T$, there exists $C_t$ depending only on the bounds on $b$ such that
\be\label{estimate}
\textbf{d}_1(m_1(s),m_2(s)) \leq C \int_0^t\|b_1(s) - b_2(s)\|_{\infty} ds.
\ee
This estimate is classical and a proof is provided in appendix for the sake of completeness.\\

Let us show that $\Phi(E_{\infty}) \subset E_{\infty}$. Consider $W \in E_{\infty}$, $m_1$ and $m_2$ and let us compute for $t < T,x\in \mathbb{T}^d$,
\be\label{eq:42}
\begin{aligned}
|\Phi(W)(t,x,m_1) - \Phi(W)(t,x,m_2)| = \mathbb{E}&\bigg[\int_0^t - \nabla_x H(X_{1,s},W(t-s,X_{1,s},m_1(s)),m(s))\\
&+ \nabla_x H(X_{2,s},W(s,X_{2,s},m_2(s)),m_2(s))ds\\
&+ \nabla_x U_0(X_{1,t},m_1(t)) - \nabla_x U_0(X_{2,t},m_2(t))\bigg],
\end{aligned}
\ee
where the $(X_{i,s},m_i(s))_{s \geq 0}$ are the solutions of 
 \begin{equation}
 \begin{aligned}
 dX_{i,s} &= D_pH(X_{i,s}, W(t-s,X_{i,s},m_i(s)),m_i(s))ds + \sqrt{2 \sigma}dW_s \text{ for } s\leq t,\\
 \partial_s m_i &= -\text{div}(B(x,W(t-s,x,m_i))m_i) + \sigma' \Delta_x m_i \text{ in } (0,t)\times \mathbb{T}^d,
 \end{aligned}
 \ee
 with initial conditions $m_i$ and $X_{i,0} = x$. Furthermore, since $W \in E_{\infty}$, we can use the estimate \eqref{estimate} to deduce that, for $t\leq T$, there exists $C > 0$, depending only on the data of the problem, such that
 \be\label{estmm}
 \textbf{d}_1(m_1(t),m_2(t)) \leq e^{C\text{Lip}(W)t}\textbf{d}_1(m_1,m_2).
 \ee
 Moreover, the following holds almost surely
 \be
 \begin{aligned}
 d|X_{1,s}-X_{2,s}| \leq & \|D_{pp}H\|_{\infty}\text{Lip}(W)(\textbf{d}_1(m_1(s),m_2(s))+ |X_{1,s} - X_{2,s}| )\\
 &+\|D_{px}H\|_{\infty}|X_{1,s} - X_{2,s}|\\
 &\leq C\text{Lip}(W)|X_{1,s} - X_{2,s}| + e^{C\text{Lip}(W)T}\textbf{d}_1(m_1,m_2).
 \end{aligned}
 \ee
 Hence, we obtain using Gr\"onwall's Lemma that, almost surely, for $s \leq T$
 \be\label{estXX}
 |X_{1,s}-X_{2,s}| \leq (e^{C\text{Lip}(W)s} -1)e^{C\text{Lip}(W)T}\textbf{d}_1(m_1,m_2).
 \ee
 Using \eqref{estmm} and \eqref{estXX} in \eqref{eq:42} and using the regularity assumptions on $H$ and $f$, we finally deduce that
 \be
 |\Phi(W)(t,x,m_1) - \Phi(W)(t,x,m_2)| \leq Ce^{C\text{Lip}(W)T}\textbf{d}_1(m_1,m_2),
 \ee
where $C>0$ is a constant which depends only on $U_0,H$ and $B$. The same type of result is also true for estimating the Lipschitz constant of $\Phi(W)$ in $x$ but we do not present it here. Thus, it follows that $\Phi(E_{\infty})\subset E_{\infty}$. 

Moreover, if $C$ is sufficiently large, then for $T$ sufficiently small, $\Phi(E_C)\subset E_C$.\\

\textbf{Step $2$: $\Phi$ is a contraction.} We now show that, if $T$ is small enough, then $\Phi$ is a contraction. Take $W_1,W_2 \in E$ and compute for $t\leq T,x\in \mathbb{T}^d,m\in \mptd$
\be\label{eq:43}
\begin{aligned}
|\Phi(W_1)(t,x,m) - \Phi(W_2)(t,x,m)| = \bigg|\mathbb{E}&\bigg[\int_0^t - \nabla_x H(X_{1,s},W_1(t-s,X_{1,s},m_1(s)), m_1(s))\\
&+ \nabla_x H(X_{2,s},W_2(t-s,X_{2,s},m_2(s)), m_2(s))ds\\
& + \nabla_x U_0(X_{1,t},m_1(t))- \nabla_x U_0(X_{2,t},m_2(t))\bigg]\bigg|,
\end{aligned}
\ee
where the $(X_{i,s},m_i(s))_{s \geq 0}$ are the solutions of 
 \begin{equation}
 \begin{aligned}
 dX_{i,s} &= D_pH(X_{i,s}, W_i(t-s,X_{i,s},m_i(s)),m_i(s))ds + \sqrt{2 \sigma}dW_s \text{ for } s\leq t,\\
 \partial_s m_i &= -\text{div}(B(x,W_i(t-s,x,m_i),m_i)m_i) + \sigma' \Delta_x m_i \text{ in } (0,t)\times \mathbb{T}^d,
 \end{aligned}
 \ee
 with initial conditions $m_i= m$ and $X_{i,0} = x$. From estimate \eqref{estimate} and the regularity of $B$, we obtain that for $t\leq T$, there exists $C> 0$ such that 
 \be
 \begin{aligned}
 \textbf{d}_1(m_1(t),m_2(t)) &\leq C \int_0^t\|W_1(t-s,\cdot,m_1(s)) - W_2(t-s,\cdot,m_2(s))\|_{\infty}ds\\
 &\leq C \int_0^t \text{Lip}(W_1)\textbf{d}_1(m_1(s),m_2(s)) + \|W_1(t-s,\cdot,\cdot) - W_2(t-s,\cdot,\cdot)\|_{\infty}ds.
 \end{aligned}
 \ee
 Using once again Gr\"onwall's Lemma, we obtain that
 \be
 \textbf{d}_1(m_1(t),m_2(t)) \leq C(e^{C\text{Lip}(W_1)t} -1)\|W_1 -W_2\|_{\infty}.
 \ee
 Hence it follows that, for any $\alpha \in (0,1)$, if $T$ is chosen small enough (where $C$ was already chosen sufficiently large), for all $W_1,W_2 \in E_C$
 \be
  \textbf{d}_1(m_1(t),m_2(t)) \leq \alpha \|W_1 - W_2 \|_{\infty}.
 \ee
 Using the same type of argument as in \textbf{Step $1$}, we can obtain the same estimate (almost surely) on $|X_{1,t} - X_{2,t}|$. From this, recalling \eqref{eq:43}, we finally obtain that
 \be
 \|\Phi(W_1)-\Phi(W_2)\|_{\infty} \leq \alpha \|W_1 - W_2 \|_{\infty}.
 \ee
 Hence $\Phi$ is a contraction from $(E_C,\|\cdot\|_{\infty})$ into itself.\\
 
 \textbf{Step $3$: Existence of a fixed point.} From the previous step, we deduce that for $W \in E_C$, $(\Phi^n(W))_{n \geq 0}$ is a Cauchy sequence. Moreover it is valued in $E_C$ and $(\|\partial_t\Phi^n(W)\|_{\infty})_{n \geq 1}$ is a bounded sequence. From Ascoli-Arzela Theorem, we deduce that $(\Phi^n(W))_{n \geq 0}$ converges uniformly to some $W_* \in E_C$. From the continuity of $\Phi$, $\Phi(W_*) = W_*$.\\
 
The rest of the result follows quite easily from standard arguments.
\end{proof}
Moreover, we also have the
\begin{Cor}\label{cor:nonlocal}
The result of the Theorem remains true if the dependence on $W$ in $H$ and $B$ is non local, as long as the Lipschitz regularity holds for the $\|\cdot\|_{\infty}$ norm.
\end{Cor}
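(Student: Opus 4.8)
The plan is to re-run the fixed point scheme of Theorem \ref{thm:grad} without change, after identifying the (very few) places where the \emph{local} structure of $H$ and $B$ entered. Suppose $H$ and $B$ depend on $W$ through functionals, so that the coefficients driving the characteristic system are now $b(t,x,m) = -D_pH(x,W(t,\cdot,\cdot),m)$, $F(t,x,m) = B(x,W(t,\cdot,\cdot),m)$ and the source is $A(t,x,m) = -D_xH(x,W(t,\cdot,\cdot),m)$, where for every $W$ in a bounded subset of $C^0(\mathbb{T}^d\times\mptd;\R^d)$ these three maps are Lipschitz in $x$ and in $m$ (for $\textbf{d}_1$), uniformly, and are Lipschitz in $W$ for the $\|\cdot\|_\infty$ norm, uniformly in $(x,m)$. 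The map $\Phi$ and the sets $E_C$ are defined exactly as in the proof of Theorem \ref{thm:grad}, with \eqref{sdepdegrad} and \eqref{repgrad} now fed with these frozen coefficients.

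First I would check that $\Phi$ is well defined: for a fixed $W\in E_\infty$ and each time, $b(t,\cdot,\cdot)$ and $F(t,\cdot,\cdot)$ are Lipschitz vector fields in $x$, continuous in $t$ because $W$ is, so the SDE--Fokker--Planck system \eqref{sdepdegrad} has strong solutions and \eqref{repgrad} makes sense since $A$ and $U_0$ are bounded and continuous. Next, $\Phi(E_C)\subset E_C$ for $C$ large and $T$ small: the argument of \textbf{Step 1}--\textbf{Step 2} of that proof, based solely on the Fokker--Planck estimate \eqref{estimate} and Gr\"onwall along the characteristics, carries over verbatim, giving the analogues of \eqref{estmm} and \eqref{estXX}, namely $\textbf{d}_1(m_1(t),m_2(t))\le e^{Ct}\textbf{d}_1(m_1,m_2)$ and the matching almost sure bound on $|X_{1,t}-X_{2,t}|$, hence control of $\text{Lip}(\Phi(W))$ and of $\|\Phi(W)\|_\infty$. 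The one structural remark is that in the local case part of the Lipschitz-in-$x$ control of the drift came from evaluating $W$ at two different characteristics; here $W$ is frozen and only $W(t-s,\cdot,\cdot)$ appears, so this term is absent and the $x$- and $m$-regularity is supplied directly by the hypothesis on $D_pH$, $D_xH$, $B$.

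The contraction step is where the $\|\cdot\|_\infty$ hypothesis is used. For $W_1,W_2\in E_C$ the two characteristic systems differ through the frozen coefficients; bounding the difference of drifts by $C\|W_1(t-s,\cdot,\cdot)-W_2(t-s,\cdot,\cdot)\|_\infty + C|X_{1,s}-X_{2,s}| + C\,\textbf{d}_1(m_1(s),m_2(s))$ and using $\textbf{d}_1(m_1(t),m_2(t))\le C(e^{Ct}-1)\|W_1-W_2\|_\infty$ from \eqref{estimate}, the computation of \textbf{Step 2} of the proof of Theorem \ref{thm:grad} (leading to \eqref{eq:43}) yields $\|\Phi(W_1)-\Phi(W_2)\|_\infty \le \alpha\|W_1-W_2\|_\infty$ with $\alpha<1$ for $T$ small. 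Then \textbf{Step 3} of that proof (Picard iteration Cauchy in $(E_C,\|\cdot\|_\infty)$, compactness via Ascoli--Arzel\`a using a uniform bound on $\partial_t\Phi^n(W)$, fixed point by continuity of $\Phi$) and the uniqueness/maximal-time argument apply unchanged, giving blow-up of $\text{Lip}(W(t,\cdot,\cdot))$ at a finite $T^c$.

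The substance of the corollary is thus that the proof of Theorem \ref{thm:grad} was already built to close in the sup norm, and the only genuine point is the normalisation in the hypothesis: the $W$-dependence must be Lipschitz for $\|\cdot\|_\infty$, not for a stronger norm involving derivatives of $W$ in $x$ or $m$, since with such a loss one could not bound $\text{Lip}(W_1-W_2)$ by $\|W_1-W_2\|_\infty$ and the contraction estimate would break. There is no real obstacle beyond this bookkeeping; everything else is a transcription in which the spatial and measure regularity is now carried entirely by $H$ and $B$ rather than partly by the solution itself.
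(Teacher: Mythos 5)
Your proposal is correct and matches the paper's argument, which simply observes that the proof of Theorem \ref{thm:grad} never exploited the locality of the $W$-dependence and already closed all estimates in the $\|\cdot\|_{\infty}$ norm; you have merely spelled out in detail the verification that the paper leaves implicit. No gap, and no genuinely different route.
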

\begin{proof}
It suffices to follow the previous proof and remark that we never used explicitly that the dependence was local, and that we always used the $\|\cdot\|_{\infty}$ norm anyway.
\end{proof}
 
 \subsection{Return to the solutions of the initial master equation}
 To complete the study of \eqref{mfgc}, we now explain how we can use the knowledge of the solutions of \eqref{mfggrad} to define solutions of \eqref{mfgc}.\\
 
 On the time interval $[0,T)$ for $T > 0$, if $W$ is the unique Lipschitz solution of \eqref{mfggrad} and $U$ is the unique solution of \eqref{mfgc}, then we expect that $\nabla_x U = W$. Hence we expect that $U$ is a solution of
 \be
  \begin{aligned}
\partial_t &U + H(x,W(t,x,m),m) -\sigma \Delta_x U(t,x,m)\\
&+\int_{\mathbb{R}^d}B(y,W(t,y,m),m)D_mU(t,x,m,y)m(dy)\\
&- \sigma' \int_{\mathbb{R}^d}\text{div}_y(D_mU(t,x,m,y)) dm(y) = 0 \text{ in } (0,\infty)\times \mathbb{T}^d\times \mathcal{P}(\mathbb{T}^d).
 \end{aligned}
 \ee
 The previous PDE is a linear transport equation in $U$. Hence it can be dealt with by means of the following representation formula, for $t < T, x \in \mptd, µ \in \mptd$
 \be\label{charc}
 U(t,x,µ) = \mathbb{E}\left[\int_0^t - H(X_s,W(t-s,X_s,m(s)))ds + U_0(X_t,m(t)) \right],
 \ee
 where $(X_s,m(s))_{s \in [0,t]}$ is the solution of 
 \be
  \begin{aligned}
 dX_s &=  \sqrt{2 \sigma}dW_s \text{ for } s\in (0,t),\\
 \partial_t m &= -\text{div}(B(x,W(t-s,x,m))m) + \sigma' \Delta_x m \text{ in } (0,t)\times \mathbb{T}^d,
 \end{aligned}
 \ee
 with initial conditions $ x$ and $ µ$. This leads us to the definition
 \begin{Def}
 A bounded function $U : [0,T)\times \mathbb{T}^d\times \mptd \to \R$ is a Lipschitz solution of \eqref{mfgc} on the time interval $[0,T)$ if there exists $W$, Lipschitz solution of \eqref{mfggrad} on $[0,T)$ such that $U$ satisfies \eqref{charc}.
 \end{Def}
 \begin{Rem}
 Let us insist that we do not need to impose the facts that $W$ satisfies $W = \nabla_x U$ nor that it is a gradient. Somehow, we translate here the fact that, given the controls of the players, i.e. the function $W$, we can simply compute the value by following the characteristics.
 \end{Rem}
  \begin{Rem}
 This Definition makes clear that, in this setting, the appropriate Lipschitz regularity is on $W=\nabla_x U$ and not on $U$. This type of fact is often interpreted in the literature as the fact that \eqref{mfgc} is an equation on the controls of the players as well as on the value.
 \end{Rem}
 As a consequence of Theorem \ref{thm:grad}, we obtain the
 \begin{Theorem}
 Under the assumptions of Theorem \ref{thm:grad}:
 \begin{itemize}
 \item There exists $T > 0$ such that there is a Lipschitz solution $U$ of \eqref{mfgc} on $[0,T)$. 
 \item There is a maximal time of existence $T^c \in (0,\infty]$ and it is such that any Lipschitz solution $V$ of \eqref{mfgc} on a time interval $[0,T)$ is such that $T\leq T^c$ and the restriction of $U$ to $[0,T)$ is equal to $V$.
 \end{itemize}
 \end{Theorem}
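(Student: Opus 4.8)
The plan is to deduce this statement directly from Theorem~\ref{thm:grad}, exploiting the fact that, once the solution $W$ of \eqref{mfggrad} is known, the equation \eqref{mfgc} for $U$ is \emph{linear}. First I would invoke Theorem~\ref{thm:grad} to obtain the maximal Lipschitz solution $W$ of \eqref{mfggrad} on its maximal interval $[0,T^c)$, with $T^c\in(0,\infty]$. Fix any $T<T^c$. Since $W$ is bounded and Lipschitz in $(x,m)$ uniformly on $[0,T]$ and $B$ is globally Lipschitz, the drift $(s,x)\mapsto B(x,W(t-s,x,m(s)),m(s))$ is bounded and Lipschitz in $x$, uniformly in $s$; hence the coupled characteristic system consisting of the driftless stochastic equation $dX_s=\sqrt{2\sigma}\,dW_s$ and the Fokker--Planck equation $\partial_s m=-\mathrm{div}(B(x,W(t-s,x,m))m)+\sigma'\Delta_x m$ admits a unique solution $(X_s,m(s))_{s\in[0,t]}$ for every initial datum $(x,\mu)\in\T^d\times\mptd$. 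This is exactly the setting in which the estimate \eqref{estimate} used in the proof of Theorem~\ref{thm:grad} applies. I would then \emph{define} $U$ on $[0,T)\times\T^d\times\mptd$ by the representation formula \eqref{charc}.

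Next I would check that $U$ is an admissible object in the sense of the Definition preceding the statement. It is bounded: on the range of the characteristic process one has $|W|\le C$, so $H(\cdot,W,\cdot)$ is bounded on $\T^d\times\{|p|\le C\}\times\mptd$ (here one uses that $D_xH$ and $D_pH$ are globally Lipschitz, hence $H$ grows at most quadratically in $p$ and is thus bounded on bounded sets of $p$), and $U_0$ is bounded; so the right-hand side of \eqref{charc} is bounded uniformly in $(t,x,\mu)\in[0,T)\times\T^d\times\mptd$. By construction $U$ satisfies \eqref{charc} with the genuine Lipschitz solution $W$ of \eqref{mfggrad}, so $U$ is a Lipschitz solution of \eqref{mfgc} on $[0,T)$. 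Letting $T\uparrow T^c$ yields a Lipschitz solution $U$ of \eqref{mfgc} on all of $[0,T^c)$; in particular the short-time existence assertion holds.

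For the uniqueness and maximality part I would take any Lipschitz solution $V$ of \eqref{mfgc} on an interval $[0,T)$. By definition there is a Lipschitz solution $\widetilde W$ of \eqref{mfggrad} on $[0,T)$ such that $V$ is given by \eqref{charc} with $\widetilde W$ in place of $W$. The maximality and uniqueness statements of Theorem~\ref{thm:grad} then force $T\le T^c$ and $\widetilde W=W|_{[0,T)}$. Since the right-hand side of \eqref{charc} depends on $\widetilde W$ only through this common function (and on the fixed data $U_0,H,B,\sigma,\sigma'$), it follows that $V=U|_{[0,T)}$. Thus $T^c$ is indeed the maximal time of existence for \eqref{mfgc} and every Lipschitz solution is the restriction of $U$; this proves the theorem.

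The whole argument is essentially bookkeeping layered on Theorem~\ref{thm:grad}. The one point genuinely requiring care — and which I expect to be the main obstacle — is the well-posedness and Lipschitz-in-data stability of the SDE--Fokker--Planck system defining \eqref{charc}: one must verify that the Fokker--Planck component is uniquely solvable when its drift is merely Lipschitz in space and only as regular in the measure variable as $W$ is, and that $(x,\mu)\mapsto(X_\cdot,m(\cdot))$ depends continuously on the initial data, so that the formula \eqref{charc} defines a bona fide bounded function. All of this, however, is already contained in, or immediately adapted from, Steps~1--2 of the proof of Theorem~\ref{thm:grad}, in particular the stability estimate \eqref{estimate} combined with Gr\"onwall's Lemma.
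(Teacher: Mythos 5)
Your proposal is correct and follows exactly the route the paper intends: the paper's own proof consists of the single sentence that the result is a direct application of Theorem \ref{thm:grad}, and your argument is precisely the bookkeeping that sentence leaves implicit — construct $U$ from the maximal $W$ via \eqref{charc}, and observe that any Lipschitz solution $V$ of \eqref{mfgc} is by definition built from some Lipschitz solution $\widetilde W$ of \eqref{mfggrad}, which the uniqueness in Theorem \ref{thm:grad} forces to coincide with $W$, so that the representation formula determines $V=U$. No further comment is needed.
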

 \begin{proof}
 This result is a direct application of Theorem \ref{thm:grad}.
 \end{proof}
 \begin{Rem}
 Another notion of solution of \eqref{mfgc} could have been introduced, based on the existence of a solution of \eqref{mfggrad}. Indeed we could have replaced all the terms involving $U$ in \eqref{mfgc}, except $\partial_t U$, by terms involving $W$. It then suffices to check that all the terms in $W$ are bounded and thus that this new equation characterizes $\partial_t U$. However, we did not use this route since the one we choose allows us to avoid technical problems such as giving a precise sense to the term $\nabla_m W$, which should appear with this new method.
 \end{Rem}

 \subsection{Master equations associated to common noise}
 As in the Hilbertian case, the notion of Lipschitz solutions can easily be adapted to cases involving so-called common noises. Let us recall that the presence of a common noise (which should probably be called common shocks) in a MFG usually translates into the addition of terms in the master equation, which can be either non-local or of higher order. We refer to \citep{bertucci2021monotone2} for several examples of possible common noises. Even though the previous approach seems quite general and should work in all the cases, we focus here on the following master equation.
 
 \be\label{mfgcc}
 \begin{aligned}
\partial_t &U(t,x,m) + H(x,\nabla_x U(t,x,m),m) -(\sigma +\sigma_0) \Delta_x U(t,x,m)\\
&+\int_{\mathbb{R}^d}B(y,\nabla_x U(t,y,m),m)D_mU(t,x,m,y)m(dy)\\
&- (\sigma' + \sigma_0) \int_{\mathbb{R}^d}\text{div}_y(D_mU(t,x,m,y)) dm(y) - 2 \sigma_0 \int_{\mathbb{T}^d}\text{div}_x(D_mU(t,x,m,y))m(dy)\\
& - \sigma_0 \int_{\mathbb{T}^{2d}}Tr[D^2_{mm}U(t,x,m,y,z)]m(dy)m(dz) = 0 \text{ in } (0,\infty)\times \mathbb{T}^d\times \mathcal{P}(\mathbb{T}^d),
 \end{aligned}
 \ee
 with initial condition $U_0 : \mathbb{T}^d\times \mptd \to \R$. A study similar to the one we just conducted can be done here. Indeed, we can consider first the equation satisfied by $W =\nabla_x U$. In this context, this PDE is
  \be\label{mfggradc}
 \begin{aligned}
\partial_t &W(t,x,m) + D_p H(x,W(t,x,m),m)\cdot \nabla_x W(t,x,m) -(\sigma +\sigma_0) \Delta_x W(t,x,m)\\
&+\int_{\mathbb{R}^d}B(y,W(t,y,m),m)D_mW(t,x,m,y)m(dy) + \nabla_x H(x,W(t,x,m),m) \\
&- (\sigma' + \sigma_0) \int_{\mathbb{R}^d}\text{div}_y(D_mW(t,x,m,y)) dm(y) - 2 \sigma_0 \int_{\mathbb{T}^d}\text{div}_x(D_mW(t,x,m,y))m(dy)\\
& - \sigma_0 \int_{\mathbb{T}^{2d}}Tr[D^2_{mm}W(t,x,m,y,z)]m(dy)m(dz) = 0 \text{ in } (0,\infty)\times \mathbb{T}^d\times \mathcal{P}(\mathbb{T}^d).
 \end{aligned}
 \ee
 Furthermore, associated to this equation, we can also consider the linear transport equation
   \be\label{lineargradc}
 \begin{aligned}
\partial_t &V(t,x,m) - b(t,x,m)\cdot \nabla_x V(t,x,m) -(\sigma +\sigma_0) \Delta_x V(t,x,m)\\
&+\int_{\mathbb{R}^d}F(t,y,m)D_mV(t,x,m,y)m(dy)\\
&- (\sigma' + \sigma_0) \int_{\mathbb{R}^d}\text{div}_y(D_mV(t,x,m,y)) dm(y) - 2 \sigma_0 \int_{\mathbb{T}^d}\text{div}_x(D_mV(t,x,m,y))m(dy)\\
& - \sigma_0 \int_{\mathbb{T}^{2d}}Tr[D^2_{mm}V(t,x,m,y,z)]m(dy)m(dz) = A(t,x,m) \text{ in } (0,\infty)\times \mathbb{T}^d\times \mathcal{P}(\mathbb{T}^d),
 \end{aligned}
 \ee
 which is associated to the system of SDE-stochastic PDE
 \be\label{sdespde}
 \begin{aligned}
 dX_s &= b(t-s,X_s,m_s)ds + \sqrt{2\sigma}dW_s + \sqrt{2\sigma_0}dW'_s \text{ for } s \in (0,t),\\
 dm_s &= [(\sigma' + \sigma_0)\Delta_x m -\text{div}(F(t-s,x,m_s)m_s)]ds -\text{div}(m_s \sqrt{2\sigma_0}dW'_s) \text{ in } (0,t)\times \mathbb{T}^d,
 \end{aligned}
 \ee
 where $(W_t)_{t \geq 0}$ and $(W'_t)_{t\geq 0}$ are two independent Brownian motions on the standard probability space $(\Omega, \mathcal{A},\mathbb{P})$. Note that the fact that $(W')_{t\geq 0}$ appears in the two equations is fundamental to obtain the crossed derivatives term which is the term in $2\sigma_0$ in \eqref{mfgcc}.
 
  Let us insist on the fact that the previous system has a unique strong solution as soon as $b$ and $F$ are Lipschitz in $x,m$, uniformly in time. Indeed, the additional Brownian motion merely acts as a translation here and does not perturb too much the mathematical analysis. As we did several times above, we can associate a representation formula to \eqref{lineargradc} which reads for $t\geq 0, x \in \mathbb{T}^d, µ \in \mptd$
 \be
 V(t,x,µ) = \mathbb{E}\left[ \int_0^t A(t-s,X_s,m_s)ds + U_0(X_t,m_t)\right],
 \ee
 where $(X_s,m_s)_{s \in [0,t]}$ is the unique (strong) solution of \eqref{sdespde} with initial conditions $x$ and $µ$. Denoting by $\Psi$ the operator defined by this formula, we can introduce the
 \begin{Def}\label{def:lipgradc}
 Given $T > 0$, a Lipschitz solution of \eqref{mfggradc} on $[0,T)$ is a function $W : [0,T)\times \mathbb{T}^d\times \mptd \to \R^d$ such that 
 \begin{itemize}
 \item $W$ is Lipschitz in $x,m$, uniformly in $t \in [0,\alpha]$ for $\alpha < T$.
 \item The following holds for any $t < T$
 \be
 W = \Psi(T,- \nabla_x H(x,W),D_pH(x,W,m),B(x,W,m),U_0).
 \ee
 \end{itemize}
 \end{Def}
 We can establish the following result
 \begin{Theorem}
 Under the assumptions of Theorem \ref{thm:grad}:
 \begin{itemize}
 \item There always exists a time $T > 0$ such that there exists a unique solution $W$ of \eqref{mfggradc} on $[0,T)$ in the sense of Definition \ref{def:lipgradc}.
 \item There exists a maximal time $T^c\in [0,\infty]$ and a solution $W$ associated to $T^c$ such that, for any solution $V$ of the problem on an interval $[0,T)$: we have that $T \leq T^c$ and the restriction of $W$ to $[0,T)$ is equal to $V$.
 \item If $T^c< \infty$, then $\text{Lip}(W(t,\cdot,\cdot)) \to \infty$ as $t \to T^c$.
 \end{itemize}
 \end{Theorem}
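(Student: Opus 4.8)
The plan is to repeat the Picard--Lindel\"of scheme used in the proofs of Theorems \ref{thm:lipd} and \ref{thm:grad}, the only genuinely new point being the stability analysis of the \emph{stochastic} Fokker--Planck equation appearing in \eqref{sdespde}. Fix $T > 0$, an initial condition $U_0$ with $\nabla_x U_0$ bounded and Lipschitz, and set
\be
E_C := \Big\{ W : [0,T]\times \mathbb{T}^d\times \mptd \to \R^d, \ \|W\|_{\infty} \leq C, \ \sup_{t \leq T}\text{Lip}(W(t,\cdot,\cdot)) \leq C\Big\},
\ee
\be
\Phi(W) := \Psi\big(T,-\nabla_x H(x,W),D_pH(x,W,m),B(x,W,m),U_0\big).
\ee
For $W \in E_\infty$ the coefficients $x\mapsto D_pH(x,W(t,x,m),m)$ and $x\mapsto B(x,W(t,x,m),m)$ are Lipschitz in $(x,m)$, uniformly in $t$; since $\sigma+\sigma_0 > 0$ and $\sigma'+\sigma_0 > 0$, the SDE--stochastic PDE system \eqref{sdespde} then admits a unique strong solution $(X_s,m_s)_{s\le t}$ for every initial datum and every $t < T$ (the extra Brownian motion $W'$ enters the $X$-equation as an additive perturbation and the $m$-equation as a stochastic transport term, neither of which affects well-posedness under these Lipschitz hypotheses). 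As $\nabla_x U_0$ and $\nabla_x H$ are continuous and bounded on the sets at hand, $\Phi(W)$ is well defined, and a direct moment bound on $(X_s,m_s)$ shows $\|\Phi(W)\|_\infty < \infty$, so $\Phi$ maps $E_\infty$ into itself.

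The crucial estimate is the analogue of \eqref{estimate} in the presence of common noise: if $m^1,m^2$ solve the stochastic Fokker--Planck equation
\be
dm^i_s = \big[(\sigma'+\sigma_0)\Delta_x m^i_s - \text{div}(b^i_s m^i_s)\big]\,ds - \text{div}\big(m^i_s\sqrt{2\sigma_0}\,dW'_s\big)
\ee
driven by the \emph{same} realisation of $W'$, with the same initial datum and drifts $b^1,b^2$ that are bounded and Lipschitz in $x$ uniformly in $s$, then for $t \le T$
\be
\mathbb{E}\big[\textbf{d}_1(m^1_t,m^2_t)\big] \leq C\,\mathbb{E}\Big[\int_0^t \|b^1_s - b^2_s\|_{\infty}\,ds\Big],
\ee
with $C$ depending only on the Lipschitz bounds. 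The point is that the common-noise term $-\text{div}(m^i_s\sqrt{2\sigma_0}\,dW'_s)$ is identical in the two equations, so it cancels in the evolution of the difference once it is tested against a $1$-Lipschitz function; conditionally on $W'$ one is back to a (random) deterministic continuity equation to which the appendix estimate \eqref{estimate} applies, and taking expectations gives the claim. Together with the almost-sure Gr\"onwall control of $|X^1_s - X^2_s|$ obtained exactly as in the proof of Theorem \ref{thm:grad} --- the shared term $\sqrt{2\sigma_0}\,dW'_s$ again dropping out of the difference --- one gets, for $C$ large enough and then $T$ small enough, first $\Phi(E_C)\subset E_C$ (estimating both $\|\Phi(W)\|_\infty$ and $\text{Lip}(\Phi(W)(t,\cdot,\cdot))$ via \eqref{estimate} and Gr\"onwall), and second that $\Phi$ is a contraction on $(E_C,\|\cdot\|_\infty)$.

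From here the argument is identical to the finite-state case. Iterating $\Phi$ from any $W\in E_C$ produces a Cauchy sequence in $(E_C,\|\cdot\|_\infty)$; since $(\|\partial_t \Phi^n(W)\|_\infty)_n$ is bounded, Ascoli--Arzel\`a yields a limit $W_*\in E_C$, which by continuity of $\Phi$ is the sought Lipschitz solution on $[0,T)$. Uniqueness on any interval of existence follows by applying the contraction estimate from an arbitrarily small initial time, and concatenating solutions --- using that the length of the interval produced above depends only on $\text{Lip}(W)$ --- gives the maximal time $T^c$, the fact that every solution is a restriction of the maximal one, and the blow-up alternative $\text{Lip}(W(t,\cdot,\cdot))\to\infty$ as $t\to T^c$ when $T^c<\infty$. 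I expect the main obstacle to be precisely the displayed $\textbf{d}_1$-stability estimate for the stochastic Fokker--Planck equation: one must make rigorous the conditioning on $W'$ step and the It\^o/duality computation behind it, handling the stochastic transport term $-\text{div}(m_s\sqrt{2\sigma_0}\,dW'_s)$ carefully; everything else is a routine transcription of the previous proofs.
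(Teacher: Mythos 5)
Your overall architecture (Picard iteration on $E_C$, contraction for $\|\cdot\|_\infty$, Ascoli--Arzel\`a, then concatenation to get $T^c$ and the blow-up alternative) matches the paper, and you correctly identify that the only genuinely new point is the $\textbf{d}_1$-stability of the stochastic Fokker--Planck equation. But the argument you offer for that key estimate does not work as stated, and this is exactly the step the paper's proof is devoted to. First, the common-noise term does \emph{not} cancel in the difference: subtracting the two equations leaves $-\text{div}\big((m^1_s-m^2_s)\sqrt{2\sigma_0}\,dW'_s\big)$, which vanishes only when $m^1_s=m^2_s$; testing against a fixed $1$-Lipschitz $f$ turns it into a martingale, but $\textbf{d}_1$ is a supremum over such $f$ and you cannot exchange the supremum with the expectation, nor run the duality argument of the appendix, whose dual equation would now have to be a backward stochastic PDE. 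Second, ``conditionally on $W'$'' the equation for $m^i$ is \emph{not} a deterministic continuity equation of the type covered by \eqref{estimate}: for a fixed realisation $\omega$ the effective transport term involves $\sqrt{2\sigma_0}\,\dot W'_s(\omega)$, which is not a bounded drift, so the appendix estimate (which requires $b$ bounded and Lipschitz) does not apply pathwise.

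The missing idea is a change of variables that removes the stochastic transport term before invoking \eqref{estimate}. The paper sets $\tilde m_{i,s}=(\tau_{\sqrt{2\sigma_0}W'_s})_{\#}m_{i,s}$, i.e.\ translates each measure by the common-noise path. Since translations are isometries for $\textbf{d}_1$, one has $\textbf{d}_1(m_{1,s},m_{2,s})=\textbf{d}_1(\tilde m_{1,s},\tilde m_{2,s})$; and by It\^o's formula the stochastic transport term together with the extra $\sigma_0\Delta_x m$ (its It\^o correction) is absorbed, so that for each fixed $\omega$ the translated measures solve the genuinely deterministic Fokker--Planck equation
\begin{equation*}
\partial_s\tilde m_i-\sigma'\Delta\tilde m_i+\text{div}(\tilde b_i\tilde m_i)=0,\qquad \tilde b_i(s,x):=b_i\big(s,x+\sqrt{2\sigma_0}\,W'_s\big),
\end{equation*}
with $\|\tilde b_1-\tilde b_2\|_\infty=\|b_1-b_2\|_\infty$. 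Hence \eqref{estimate} applies \emph{almost surely} with a constant independent of $\omega$, which is both what the contraction argument needs and strictly stronger than the estimate in expectation you aimed for. With this substitution your proof goes through; without it, the crucial step is unjustified.
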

The previous statement is word for word the same as the one in the case $\sigma' = 0$ and the same almost holds for their proofs. That is why we only sketch the proof here, mainly by highlighting the main differences with the proof of Theorem \ref{thm:grad}.
 \begin{proof}
 We use the same notation as in the proof of Theorem \ref{thm:grad}.\\
 
 The only key argument here consists in showing that the estimate \eqref{estimate} can also be used in this stochastic case. Consider $m_1$ and $m_2$, two solutions of
 \be
  dm_{i,s} = [(\sigma' + \sigma_0)\Delta_x m_{i,s} -\text{div}(b_i(s,x)m_{i,s})]ds -\text{div}(m_{i,s} \sqrt{2\sigma_0}dW'_s) \text{ in } (0,t)\times \mathbb{T}^d,
 \ee
 for $(W'_t)_{t\geq 0}$ a standard Brownian motion and $b_1,b_2: [0,\infty)\times\mathbb{T}^d\times \mptd\to \R^d$ two bounded vector fields. Consider $\tilde{m}_1$ and $\tilde{m}_2$ given by 
 \be
 \tilde{m}_{i,s} = (\tau_{\sqrt{2\sigma'}W'_s})_{\#}m_{i,s},
 \ee
 where $\tau_x: \T^d \to \T^d$ is the translation of $x$ and $T_{\#}µ$ denotes the image measure of the measure $µ$ by the map $T$. Let us remark that, for any $s \geq 0$
 \be
 \textbf{d}_1(m_{1,s},m_{2,s}) =  \textbf{d}_1(\tilde{m}_{1,s},\tilde{m}_{2,s}).
 \ee
 We now observe that, by construction, for any $\omega \in \Omega$,
 \be
 \partial_t \tilde{m}_{i} -\sigma' \Delta \tilde{m}_i + \text{div}(\tilde{b}_i\tilde{m}_i) = 0 \text{ in } (0,t)\times \T^d \times \mptd,
 \ee
 where $\tilde{b}_i(s,x) := b_i(s,x+\sqrt{2\sigma_0}W'_s)$. Remarking finally that
 \be
 \|\tilde{b}_1 - \tilde{b}_2\|_{\infty} = \|b_1 - b_2\|_{\infty},
 \ee
 we deduce that the estimate \eqref{estimate} is satisfied almost surely (with a constant independent of $\omega \in \Omega$) in this stochastic case.\\
 
 The rest of the proof follows the same argument as in the case without common noise.
 \end{proof}
 As in the case without common noise, we can of course use a notion of solution of \eqref{mfggradc} to establish a definition and results on solutions of \eqref{mfgcc}. Although we do not detail it here as it will merely be a copy of the previous case.
  
  \subsection{Master equations involving the image measure}
  In several MFG models, see for instance \citep{carmona2018probabilistic,cardaliaguet2016mean,bertucci2019some,kobeissi2022classical}, the dependence on $m$ of the non-linearities $D_pH$ and $B$ in \eqref{mfgc} happens through the image measure of $m$ by a certain function of the gradient in $x$ of the value function. Hence, in such cases, the non-linearities have a form similar to $\tilde{A} : \T^d \times (\T^d\to\R^d)\times \mptd \to \R^d$
  \be
 \tilde{A}(x,\phi,m):= A(x,m,\phi(x), \psi(\phi)_{\#}m).
  \ee
  where $A: \T^d\times\mptd\times\R^d\times \mptd\to \R^d$, $\psi : \R^d \to \R^d$ and $T_{\#}m$ denotes the image measure of $m$ by the map $T$. The following result holds.
 
\begin{Prop}
If $A$ is Lipschitz, then so is $\tilde{A}$ on $\T^d\times E_C\times \mptd$, where $E_C$ is the set of Lipschitz functions $\T^d \to \R^d$ with Lipschitz constant at most $C$.
\end{Prop}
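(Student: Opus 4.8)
The plan is to estimate $|\tilde A(x,\phi,m)-\tilde A(x',\phi',m')|$ by modifying one argument of $A$ at a time and invoking the joint Lipschitz continuity of $A$ (with respect to the Euclidean distances on $\T^d$ and $\R^d$ and $\textbf{d}_1$ on $\mptd$), the only nontrivial point being the continuous dependence of the image measure on $(\phi,m)$. Write $\Theta(\phi,m):=(\psi\circ\phi)_{\#}m$ (the composition $\psi\circ\phi$ being read with values in the space on which $A$'s last argument lives), so that $\tilde A(x,\phi,m)=A(x,m,\phi(x),\Theta(\phi,m))$. Denoting by $L$ a Lipschitz constant of $A$ and inserting the four intermediate quantities obtained by successively replacing $x$ by $x'$, $m$ by $m'$, $\phi(x)$ by $\phi'(x')$ and $\Theta(\phi,m)$ by $\Theta(\phi',m')$, one gets
\be
|\tilde A(x,\phi,m)-\tilde A(x',\phi',m')|\le L\Big(|x-x'|+\textbf{d}_1(m,m')+|\phi(x)-\phi'(x')|+\textbf{d}_1(\Theta(\phi,m),\Theta(\phi',m'))\Big).
\ee
The third term is immediate: since $\phi\in E_C$, $|\phi(x)-\phi'(x')|\le|\phi(x)-\phi(x')|+\|\phi-\phi'\|_{\infty}\le C|x-x'|+\|\phi-\phi'\|_{\infty}$.

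The key step is the last term. Here I would use two elementary properties of $\textbf{d}_1$: for measurable maps $T_1,T_2$ and measures $\mu,\nu\in\mptd$,
\be
\textbf{d}_1(T_{1\#}\mu,T_{2\#}\mu)\le\|T_1-T_2\|_{\infty},\qquad \textbf{d}_1(T_{1\#}\mu,T_{1\#}\nu)\le\text{Lip}(T_1)\,\textbf{d}_1(\mu,\nu),
\ee
the first coming directly from the duality formula defining $\textbf{d}_1$ and the second from the fact that $f\circ T_1$ is $\text{Lip}(T_1)$-Lipschitz whenever $f$ is $1$-Lipschitz. Applying these with $T_1=\psi\circ\phi$, $T_2=\psi\circ\phi'$, together with $\|\psi\circ\phi-\psi\circ\phi'\|_{\infty}\le\text{Lip}(\psi)\|\phi-\phi'\|_{\infty}$ and $\text{Lip}(\psi\circ\phi)\le C\,\text{Lip}(\psi)$ for $\phi\in E_C$, a triangle inequality yields
\be
\textbf{d}_1(\Theta(\phi,m),\Theta(\phi',m'))\le\text{Lip}(\psi)\|\phi-\phi'\|_{\infty}+C\,\text{Lip}(\psi)\,\textbf{d}_1(m,m').
\ee
Combining the three displays gives $|\tilde A(x,\phi,m)-\tilde A(x',\phi',m')|\le C'\big(|x-x'|+\|\phi-\phi'\|_{\infty}+\textbf{d}_1(m,m')\big)$ with $C'$ depending only on $L$, $C$ and $\text{Lip}(\psi)$, which is the claim.

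The only genuine obstacle is really a matter of care rather than difficulty: one must make sure the push-forward estimate is taken with respect to the $\|\cdot\|_{\infty}$ distance on $E_C$ — so that Corollary \ref{cor:nonlocal} applies and $\tilde A$ qualifies as a nonlocal nonlinearity in the sense needed there — and not a weaker norm; this is automatic from the duality formula above. I would also record that the argument uses the Lipschitz continuity of $\psi$ (implicit in the setting), and that, since $\T^d$ is compact, any $\phi\in E_C$ has bounded range, so if one prefers it suffices that $A$ and $\psi$ be \emph{locally} Lipschitz, with the constant $C'$ then depending also on a bound for the ranges of elements of $E_C$.
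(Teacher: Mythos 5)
Your proof is correct and follows essentially the same route as the paper: both reduce the matter to the estimate $\textbf{d}_1((\psi\circ\phi)_{\#}\mu,(\psi\circ\phi')_{\#}\mu')\le \|\psi\|_{Lip}\|\phi-\phi'\|_{\infty}+C\|\psi\|_{Lip}\,\textbf{d}_1(\mu,\mu')$ obtained from the duality formula by changing the measure and the map one at a time. Your version merely packages this as two standard push-forward lemmas and spells out the final bookkeeping that the paper leaves implicit.
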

\begin{proof}
Let us first compute, for $\phi, \phi'$ two Lipschitz functions $\T^d \to \R^d$ with Lipschitz constant $C$ and $µ,µ' \in \mptd$
\be
\begin{aligned}
\textbf{d}_1(\psi(\phi)_{\#}µ,\psi(\phi')_{\#}µ') &= \sup_{\|f\|_{Lip} \leq 1} \left\{ \int_{\T^d}f(\psi(\phi(x)))µ(dx) - \int_{\T^d}f(\psi(\phi'(x)))µ'(dx)\right\}\\
&\leq \sup_{\|f\|_{Lip} \leq 1} \left\{ \int_{\T^d}f(\psi(\phi(x)))µ(dx) - \int_{\T^d}f(\psi(\phi(x)))µ'(dx)\right\}\\
&\quad + \sup_{\|f\|_{Lip} \leq 1} \left\{ \int_{\T^d}f(\psi(\phi(x)))-f(\psi(\phi'(x)))µ'(dx)\right\}\\
& \leq C\|\psi\|_{Lip}\textbf{d}_1(µ,µ') + \|\psi\|_{Lip}\|\phi - \phi'\|_{\infty}.
\end{aligned}
\ee
Hence we deduce that $\tilde{A}$ is Lipschitz continuous
\end{proof}
From this property, we easily deduce from Corollary \ref{cor:nonlocal} that Theorem \ref{thm:grad} can be extended to situations involving the image measure, that is master equation of the form
 \be\label{mfgim}
 \begin{aligned}
\partial_t &U(t,x,m) + H(x,\nabla_x U(t,x,m),m,\psi(\nabla_xU)_{\#}m) -\sigma \Delta_x U(t,x,m)\\
&+\int_{\mathbb{R}^d}B(y,\nabla_x U(t,y,m),m,\psi'(\nabla_xU)_{\#}m)D_mU(t,x,m,y)m(dy)\\
&- \sigma' \int_{\mathbb{R}^d}\text{div}_y(D_mU(t,x,m,y)) dm(y) =  0 \text{ in } (0,\infty)\times \mathbb{T}^d\times \mathcal{P}(\mathbb{T}^d),
 \end{aligned}
 \ee
 for $\psi$ and $\psi'$ two Lipschitz functions.

  \section{Comments and future developments on Lipschitz solutions of MFG master equations}\label{sec:add}
  We present in this sections several comments and future directions of research on Lipschitz solutions of MFG master equations that we believe could be of interest.
  \subsection{Application to numerical computations}
  Let us precise what we believe to be the principal application of the previous uniqueness result : the justification of numerical computations. Indeed, if by using some abstract or black-box method, like neural networks for instance, one is able to exhibit a solution of a MFG master equation of one of the types presented above, then if it is a Lipschitz solution, it is necessary the unique one.\\
  
  With the growing number of works on the use of machine learning techniques to solve MFG master equation, and the lack of proof of convergence results, the results presented above justify the following heuristic : if one Lipschitz solution has been selected, then it have at least some meaning in the sense that it is the only one. Moreover, let us insist that, because several machine learning methods are parametrized (neural networks for instance). Hence establishing some regularity properties of the learned solution can be done a priori.
  
  \subsection{Another representation formula}
  
 We provide here another approach to represent solutions of \eqref{mfgc}, in which we linearize only the transport term in $m$. To be more precise, we consider the following equation
  \be\label{mfgsemi}
 \begin{aligned}
\partial_t &V(t,x,m) + H(x,\nabla_x V(t,x,m),m) -\sigma \Delta_x V(t,x,m)\\
&+\int_{\mathbb{R}^d}B(t,y,m)\cdot D_mV(t,x,m,y)m(dy)\\
&- \sigma' \int_{\mathbb{R}^d}\text{div}_y(D_mV(t,x,m,y)) dm(y) = f(x,m) \text{ in } (0,\infty)\times \mathbb{T}^d\times \mathcal{P}(\mathbb{T}^d),
 \end{aligned}
 \ee
 where $B : [0,\infty)\times \mathbb{T}^d\times \mptd \to \R$ is a vector field. To this equation, we naturally associates the system
 \be\label{sdepde2}
 \begin{aligned}
 dX_s &= \alpha_s ds + \sqrt{2 \sigma}dW_s \text{ for } t\geq 0,\\
 \partial_s m &= -\text{div}(B(t-s,x,m)m) + \sigma' \Delta_x m \text{ in } (0,t)\times \mathbb{T}^d,
 \end{aligned}
 \ee
 where $(\alpha_s)_{s \geq 0}$ represents the control of a player. In this framework, the natural representation of a solution of \eqref{mfgsemi} is 
 \be\label{charsemi}
V(t,x,µ) = \inf_{\alpha}\mathbb{E}\left[ \int_0^t  H^*(X^{\alpha}_s,\alpha_s,m(s)) ds + U_0(X^{\alpha}_t,m(t))\right],
 \ee
 where $(X^{\alpha}_s,m(s))_{s \in [0,t]}$ is the solution of \eqref{sdepde2} with initial conditions $x$ and $µ$ and where $H^*$ is the Fenchel conjugate of $H$ with respect to its second argument. If the vector field $B$ is Lipschitz continuous, it is a standard result of stochastic optimal control that $V$ is indeed well defined. As in the previous case we denote by $\Psi$ the operator defined by the previous relation. That is, if $V$ is given by \eqref{charsemi}, then we denote 
 \be
 V = \Psi(t,B,U_0).
 \ee
 We can introduce the following notion of solution.
  \begin{Def}\label{def:lipc}
 Given a time $T$ and an initial condition $U_0$, a Lipschitz solution of \eqref{mfgc} is a function $U : [0,T) \times \mathbb{T}^d\times \mptd \to \R$ such that
 \begin{itemize}
 \item $\nabla_xU$ is Lipschitz in $x,m$, uniformly in $[0,t]$ for any $t < T$.
 \item For any $t < T$, 
 \be\label{fixedpoint}
 U = \Psi(t, D_pH(x,\nabla_x U,m),U_0).
 \ee
 \end{itemize}
 \end{Def}
We do not provide a detailed mathematical study of this notion of solution, mainly because it will be redundant with the one we provided in Section \ref{sec:c} and also more technical. Nonetheless, this notion of solution is easily interpretable in terms of game theory. Indeed, in this context, $\Psi(t,B,U_0)$ is simply the operator which computes the value associated to the best response strategy for a generic player, given that it anticipates the vector field $B$. By anticipation of the vector field $B$, we mean that it anticipates that the repartition of players is going to be driven by $B$. Hence, a fixed point of \eqref{fixedpoint} is easily interpretable in terms of a fixed point of a best reply operator. 

Finally, this operator $\Psi$ is, in general in game theory contexts, of a practical use outside of just defining a notion of equilibria for games. It can help to study procedures such as fictitious play for instance.

\subsection{Another notion of monotone solutions for MFG master equations}
The notion of Lipschitz solution we just presented is based on the idea that through some Lipschitz regularity, we can define characteristics for the equation and then a value through a representation formula. Another natural regime in which the flow of an evolution equation is well defined is the monotone one, see for instance \citep{brezis}. We now make a brief development on how we could use this idea to define a solution of the master equation, in the same spirit as what we did for the Lipschitz regularity. We work in the case of \eqref{mfgH} to fix ideas.\\

In the setting of \eqref{mfgHn}, we recall that the natural characteristics of the master equation have the form
\be\label{ode4}
\frac{d X(s)}{ds} = -F(X(s),U(t-s,X(s))),
\ee
with initial condition
\be
X(0) = x_0 \in H.
\ee
The previous ODE admits a unique solution as soon as $(t,X) \to F(X,U(t,X))$ is Lipschitz in $X$, uniformly in $t$. But is also the case when 
\be\label{mon}
\forall t \geq 0, X,Y \in H, \langle F(X,U(t,X))- F(Y,U(t,Y)), X-Y \rangle \geq 0.
\ee
Based on this remark, we can provide the following definition
\begin{Def}
For $T > 0$, given an initial condition $U_0$, a bounded function $U:[0,T]\times H \to H$ is a solution of \eqref{mfgH} on $[0,T]$ if
\begin{itemize}
\item For $t\leq T$, $U$ satisfies \eqref{mon}.
\item For $t \leq T, x \in H$, $U$ satisfies
\be
U(t,x) = \int_0^t G(X(s),U(t-s,X(s)))ds + U_0(X(t)),
\ee
where $(X(s))_{s \in [0,t]}$ is the unique solution of \eqref{ode4} with initial condition $x$.
\end{itemize}
\end{Def}
Obviously, this notion of monotonicity is different from the usual one in MFG theory and one does not clearly imply the other.

\subsection{Master equations with control on the volatility}
In the setting of Section \ref{sec:c}, we can also consider master equations which are associated to a MFG in which the players control the volatility of their trajectory. In such a setting, the master equations takes the form of 
\be
\begin{aligned}
\partial_t &U(t,x,m) +\int_{\mathbb{T}^d}Tr[G(y,D^2_x U(t,y,m),m)D^2_y\nabla_mU(t,x,m,y)]m(dy) \\
&+ F(x,D^2_x U(t,x,m),m)=  0 \text{ in } (0,\infty)\times \mathbb{T}^d\times \mathcal{P}(\mathbb{T}^d),
 \end{aligned}
\ee
 where $F:\T^d \times S_d(\R)\times \mptd \to \R$ and $G :\T^d \times S_d(\R)\times \mptd \to S_d(\R)$ are given functions. Recall that $D^2_x \phi$ stands for the Hessian matrix of a function $\phi: \T^d\to \R$.\\
 
Previously, we developed a theory of Lipschitz solution of the master equation based on the equation satisfied by $W = \nabla_x U$. In this context, we believe that a similar approach can be developed based on properties of the equation satisfied by $W = D^2_x U$. However, a direct use of the argument of the previous proofs is not possible and new results are here needed.

\section*{Acknowledgments} The three authors acknowledge a partial support from the Lagrange Mathematics and Computing Research Center and a partial support from the chair FDD (Institut Louis Bachelier).

\bibliographystyle{plainnat}
\bibliography{bibremarks}

\appendix
\section{Proof of the estimate used in the proof of Theorem \ref{thm:grad}}
Remark that $µ =m_1-m_2$ is a solution of 
\be
\partial_t µ - \sigma \Delta µ + \text{div}(bµ) = \text{div}((b_2-b_1)m_2) \text{ in } (0,t_1) \times \mathbb{T}^d.
\ee
Consider now $t<t_1$ and a Lipschitz function $\phi_0 : \mathbb{T}^d \to \R$ and consider the solution $\phi$ of 
\be\label{eq:annexe}
\begin{cases}
-\partial_s \phi - \sigma \Delta \phi - b \cdot \nabla_x \phi = 0 \text{ in } (0,t)\times \mathbb{T}^d,\\
\phi|_{s = t} = \phi_0 \text{ in } \mathbb{T}^d.
\end{cases}
\ee
Using the fact that $µ$ is in particular a weak solution of the previous PDE, we deduce that
\be
\int_{\mathbb{T}^d}\phi_0 dµ_t = \int_0^t\int_{\mathbb{T}^d}\nabla_x \phi(s,x) \cdot(b_1 - b_2)(s,x)m_2(s)(dx).
\ee
Then result then follows from standard parabolic estimates on \eqref{eq:annexe}.

\end{document}